\DeclareMathAlphabet\mathbfcal{OMS}{cmsy}{b}{n}
\definecolor{Gray}{gray}{0.93}
\definecolor{LightCyan}{rgb}{1,1,1}
\def\tX{{\mathbfcal X}}
\def\tG{{\mathbfcal G}}
\def\tO{{\mathbfcal O}}
\def\mX{{\mathbf X}}
\newcommand{\eqdef}{:=}
\newtheorem{theorem}{\bf Theorem} 
\newtheorem{lemma}{\bf Lemma}
\newtheorem{proposition}{\bf Proposition}
\begin{document}
 \doublespacing
%
\title{L1-norm Tucker Tensor Decomposition}
%
%
%

\author{ Dimitris G. Chachlakis,$^\dag$   Ashley Prater-Bennette,$^\ddag$  and Panos P. Markopoulos$^{\dag *}$ \thanks{$^\dag$D. G. Chachlakis and P. P. Markopoulos are with the Department of Electrical and Microelectronic Engineering, Rochester Institute of Technology, Rochester, NY ({dimitris@mail.rit.edu}, {panos@rit.edu}).}
\thanks{$^\ddag$A. Prater-Bennette is with the Air Force Research Laboratory, Information Directorate, Rome, NY ({ashley.prater.3@us.af.mil}).}
\thanks{$^*$Corresponding author.} 
\thanks{{\bf This is a preprint of an article that has been submitted for peer-reviewed publication. This preprint may contain errata. Some preliminary results were presented in} \cite{PPMGLOBALSIP2018}.}
\vspace{-1.5cm}
}

%
%

\markboth{L1-norm Tucker Tensor Decomposition (DRAFT)}%
{THIS IS A DRAFT}
%



\maketitle

\begin{abstract}
Tucker  decomposition is a common method for the analysis  of multi-way/tensor data.
 Standard Tucker has been shown to be sensitive against heavy corruptions, due to its  L2-norm-based formulation which places squared emphasis to peripheral entries.
In this work, we explore L1-Tucker, an L1-norm based reformulation of standard Tucker decomposition. 
After formulating the problem, we present  two  algorithms for its solution,  namely  L1-norm Higher-Order Singular Value Decomposition (L1-HOSVD) and L1-norm Higher-Order Orthogonal Iterations (L1-HOOI). The presented algorithms are accompanied by  complexity and convergence analysis.
Our numerical studies on tensor reconstruction and classification corroborate that  L1-Tucker, implemented by means of the proposed methods,   attains similar performance to standard Tucker when the processed data are corruption-free, while it exhibits sturdy resistance against heavily corrupted entries. 
\end{abstract}

\begin{IEEEkeywords}
L1-norm, tensor decomposition, Tucker, corrupted data
\end{IEEEkeywords}

%
\IEEEpeerreviewmaketitle

\section{Introduction}
\label{sec:intro}

Tucker decomposition  \cite{TUCKER,Kolda} is a cornerstone method for the analysis and compression of tensor data,  with a wide array of applications in data science \cite{papalex}, signal processing, machine learning \cite{SIDIROPOULOS},  and communications \cite{martin2014}, among other fields.  
Tucker decomposition is  typically computed by means of the Higher-Order Singular-Value Decomposition (HOSVD) algorithm, or  the Higher-Order Orthogonal Iterations (HOOI) algorithm \cite{Lathauwer,Kolda}. 
Other popular variants include Truncated HOSVD (T-HOSVD) \cite{Lathauwer,TUCKER,haardt2008higher}, Sequentially Truncated HOSVD (ST-HOSVD) \cite{vannie}, and Hierarchical HOSVD \cite{hierarchy}. 
Parallel algorithms for HOSVD have also been developed \cite{austin2016parallel}, with the ability to process very large  datasets.
When an $N$-way tensor is  processed as a collection of  $(N-1)$-way tensor measurements, Tucker is reformulated to Tucker2 decomposition \cite{ashley1}. For the special case of $N=3$, Tucker2 has also been presented as Generalized Low-Rank Approximation of Matrices (GLRAM) \cite{YE, sheehan2007higher} or 2-Dimensional Principal Component Analysis (2DPCA) \cite{2dpca, 2dpca2}.  For $N=2$,  both Tucker and Tucker2 boil down to standard matrix Principal-Component Analysis (PCA), which is practically solved through Singular-Value Decomposition (SVD) \cite{GOLUB}. 
In fact, both HOSVD and HOOI are high-order generalizations of SVD.

Due to its L2-norm formulation (minimization of the L2-norm of the residual-error or, equivalently, maximization of the L2-norm of the multi-way projection), standard Tucker decomposition has been shown to be sensitive against faulty entries within the processed tensor (also known as outliers), whether implemented by means of HOSVD, or HOOI\cite{FU, goldfarb, PPMSPL2018}. 
The same sensitivity has also been  documented in PCA, which is a special case of Tucker for $2$-way tensors (matrices). 
For matrix decomposition,  L1-norm-based PCA (L1-PCA) \cite{PPMTSP2014},  formulated by simple substitution of the L2-norm in PCA with the L1-norm, has exhibited solid robustness against heavily corrupted data in an array of applications \cite{PPMICIP2015, YLTM2016, PPMRADAR2017}.
Similar outlier resistance has been recently attained by algorithms for L1-norm reformulation of Tucker2 decomposition of $3$-way tensors (L1-Tucker2) \cite{PANG2010, DGCSPIE2018, DGCICASSP2018, PPMSPL2018}.

In this work we study {L1-Tucker}, an L1-norm reformulation of the general Tucker decomposition of $N$-way tensors. Then, we propose two new algorithms for the solution of L1-Tucker, namely L1-HOSVD and L1-HOOI, accompanied by formal convergence and complexity analysis. 
Our numerical studies show that the proposed L1-Tucker methods perform similar to standard Tucker methods (HOSVD and HOOI) when the processed data are nominal. However, L1-HOSVD and L1-HOOI  are markedly less affected by corruptions among the processed data.

\section{Technical Background}
\label{sec:background}

\subsection{Definitions and Notation}
\label{definitions}
An $N$-way tensor is an array of scalars, each entry of which is identified by $N$ indices. 
Vectors and matrices are $1$-way and $2$-way tensors, respectively. 
 An $N$-way tensor $\tX \in \mathbb R^{D_1 \times D_2 \times \ldots \times D_N}$ can also  be viewed as an $M$-way tensor in $\mathbb R^{D_1 \times D_2 \times \ldots \times D_M}$, for any $M>N$, with $D_m=1$ for $m >N$.
 For any fixed set of indices ${\{i_m\}_{m \in [N]\setminus n}}$,  vector 
$
\tX ({i_1, \ldots, i_{n-1}, : , i_{n+1}, \ldots, i_{N}}) \in \mathbb R^{D_n}
$
is called a mode-$n$ fiber  of $\tX$. 
Thus,  $\tX \in \mathbb R^{D_1 \times D_2 \times \ldots \times D_N}$  can also be viewed as a structured collection of its
$P_n := {\prod_{m \in [N]\setminus n} D_m}$  $n$-th mode fibers, for any ${n \in [N]:=\{ 1, 2, \ldots, N\}}$. 
Arranging all mode-$n$ fibers of tensor $\tX$ as columns of a matrix leads to a {mode-$n$ matrix unfolding} (also known as {flattening}) of $\tX$,  
${[\tX]_{(n)} \in \mathbb R^{D_n \times P_n}}$. 
Certainly, the mode-$n$ fibers of $\tX$ can be arranged in multiple different orders, resulting in column permutations of $[\tX]_{(n)}$. 
In this work, we consider the common unfolding order, by which tensor element $\tX ({i_1, i_2, \ldots, i_N})$ is mapped to the {mode-$n$} unfolding element $[\tX]_{(n)} ({i_n, j})$,  for ${j= 1 + \sum_{m \in [N]\setminus n}
(i_m -1)J_m}$ and 
${J_m := \prod_{
k \in [m-1]\setminus n} D_k}
$, for every $m \in [N]$~\cite{Kolda}.

\subsection{Tucker Decomposition}

Tucker tensor decomposition factorizes  $\tX$ into $N$ orthonormal bases and a core tensor. Specifically, considering $\{d_n\}_{n \in [N]}$ that satisfy ${d_n \leq D_n}$ ${\forall n \in [N]}$, Tucker decomposition is formulated as
\begin{align}
\underset{
\{\mathbf U_n \in \mathbb S(D_n, d_n)\}_{n \in [N]} 
}{\text{max.}}
\left\| \tX \textstyle  \times_{n \in [N]} \mathbf U_{n}^\top \right\|_F^2,
\label{Tucker}
\end{align}
where $\times_n$ denotes the mode-$n$ tensor-to-matrix product \cite{Kolda}, 
$\times_{n \in [N]} \mathbf U_{n}^\top $ summarizes the multi-mode product $ \times_{1} \mathbf U_{1}^\top \times_{2} \mathbf U_{2}^\top \times_{3} \ldots \times_{N} \mathbf U_{N}^\top$,   
$\mathbb S(D,d)=\{ \mathbf U \in \mathbb R^{D \times d};$ $~
\mathbf U^{\top} \mathbf U=\mathbf I_{d}\}$ is the Stiefel manifold containing all rank-$d$ orthonormal bases in $\mathbb R^{D}$,  and  $\| \cdot \|_F^2$  denotes the   L2 (or Frobenius) norm, returning  the summation of the squared entries of its tensor argument.
If  ${{\mathbf U}}^{\text{tckr}}_{n}$ is the mode-$n$  basis derived by solving \eqref{Tucker},  then 
\begin{align}
\tG  =  \tX  \times_{n \in [N]} {\mathbf U_{n}^{\text{tckr}}}^\top
\end{align} 
is the Tucker core of $\tX$, and $\tX$  is  Tucker-approximated by 
\begin{align}
\hat{\tX} =  \tG   \times_{n \in [N]} \mathbf U_n^{\text{tckr}}.
\end{align}
 Equivalently, $ \hat{\tX}  =   \tX  \times_{n \in [N]} \mathbf U_{n}^{\text{tckr}}{\mathbf U_n^{\text{tckr}}}^\top $.  If  $d_{n}=D_n~\forall n$,  it trivially holds that $\tX = \hat{\tX}$. The minimum values of $\{d_n\}_{n\in [N]}$ for which  $ \hat{\tX}  =   \tX $ are the respective mode ranks of $\tX$.  A schematic illustration of Tucker decomposition for $N=3$ is offered in Figure \ref{fig:schematic}. 
 
The solution to \eqref{Tucker} is commonly pursued by means of the Higher-Order Singular-Value Decomposition algorithm (HOSVD) \cite{Lathauwer} or the Higher-Order Orthogonal Iterations algorithm (HOOI) \cite{Kolda}.  A brief review of HOSVD and HOOI follows.

\begin{figure}[t!]
	
	\centering
	\includegraphics[width=0.6\textwidth]{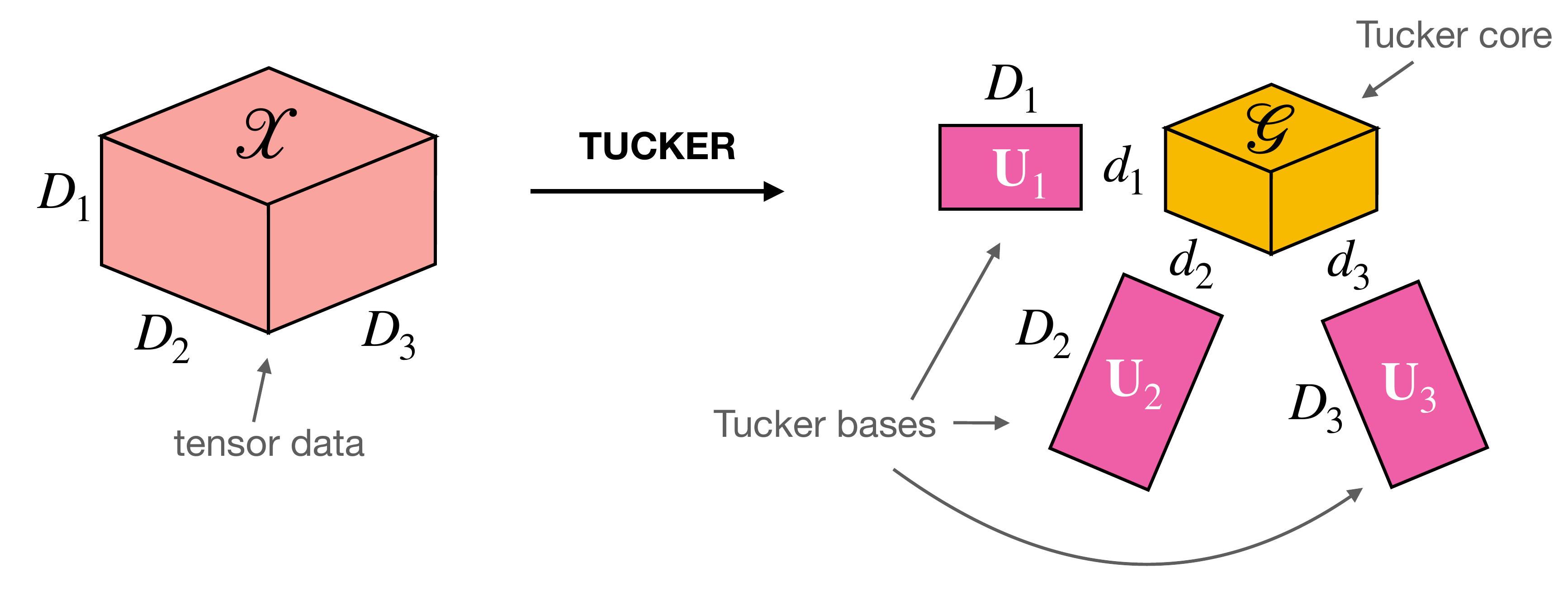}
	
	\caption{Schematic illustration of Tucker decomposition for $N=3$. }
	
	\label{fig:schematic}
\end{figure}

\subsubsection{HOSVD Method}

HOSVD approximates ${{\mathbf U}}^{\text{tckr}}_{n}$,  disjointly for each ${n\in [N]}$, by the $d_n$ principal components of the mode-$n$ unfolding $[\tX]_{(n)}$,  
\begin{align}
{\mathbf U}^{\text{hosvd}}_{n} \in
\underset{
\begin{smallmatrix}
\mathbf U \in \mathbb S(D_n, d_n)
\end{smallmatrix}
}{\text{argmax}}
~
\| \mathbf U^\top  [\tX]_{(n)}\|_F^2, 
\label{pca}
\end{align}
computed  by means of standard SVD.  Arguably, HOSVD draws motivation from the $N=2$ case, where $\tX=\mX$ is a $D_1 \times D_2$ matrix and  Tucker in \eqref{Tucker} simplifies to maximizing $ \| \mathbf U_1^\top \mathbf X \mathbf U_2 \|_F^2$. Indeed, for this special case of matrix  decomposition, the {optimal} orthonormal factors $\mathbf U_{n}^{\text{tckr}}$ 
can be  found disjointly and coincide with the solution to \eqref{pca}, $\mathbf U_{n}^{\text{hosvd}}$, where $[\tX]_{(1)}=\mathbf X$ and $[\tX]_{(2)}=\mathbf X^\top$. For the general $N
>2$ case, however, all $N$ bases are to  be found  jointly and the  HOSVD bases constitute, in general, approximations of the solution to \eqref{Tucker}.

\subsubsection{HOOI Method}

HOOI is a converging iterative procedure, which provably attains a higher value of the metric in \eqref{Tucker} than HOSVD, but still does not necessarily return the optimal solution \cite{sheehan2007higher, liu2014generalized, de2000best, xu2018convergence}.
For each mode $n\in [N]$, HOOI is typically (but not necessarily) initialized to  the solution of HOSVD as $\mathbf U_{n,0}^{\text{hooi}} = \mathbf U_n^{\text{hosvd}}$. Then,  the HOOI algorithm conducts a sequence of provably converging   iterations. 
That is, at the $t$-th iteration, HOOI sets 
 \begin{align}
 \mathbf A_{n,t}:=\left[\tX    \times_{m \in [n-1] } \mathbf U_{m, t}^{\text{hooi}}     \times_{k \in [N-n]+n}  \mathbf U_{k, t-1}^{\text{hooi}} \right]_{(n)}
 \end{align}
 and updates 
  \begin{align}
{\mathbf U}^{\text{hooi}}_{n,t} \in 
\underset{
\begin{smallmatrix}
\mathbf U \in \mathbb S(D_n, d_n)
\end{smallmatrix}
}{\text{argmax}}
~
 \| \mathbf U^\top  \mathbf  A_{n,t}\|_F^2,
 \label{pca2}
\end{align}
obtained again by standard SVD of $\mathbf  A_{n,t}$ ($d_n$ dominant left-singular vectors).

\subsection{Data Corruption and L1-norm Reformulation of PCA}
\label{sec_l1pca}

Large  datasets often contain heavily corrupted, outlying entries due to various causes, such as sensor malfunctions, errors in data storage/transfer, heavy-tail noise, intermittent variations of the sensing environment, and even intentional dataset ``poisoning'' \cite{bigdata}. 
Regretfully, such corruptions that lie far from the sought-after subspaces are known to  {significantly} affect PCA and its multi-way generalization, Tucker, even when they appear as a small fraction of the processed data \cite{PPMTSP2014, FU, PPMSPL2018}.  
Accordingly, in such cases,  the performance of any application that relies on PCA and Tucker can be compromised.
This corruption sensitivity can be largely attributed to the L2-norm formulation of Tucker/PCA, which places squared emphasis on each entry of the core,  thus  benefiting corrupted fibers. To demonstrate this  sensitivity of Tucker, we present the following simple example. 
We build  $\tX \in \mathbb R^{10 \times 10\times 10}$  with entries  independently drawn from $\mathcal N(0,1)$. Then, we corrupt additively the single entry $[\tX]_{2,3,4}$ with a point from $\mathcal N(0, \mu^2)$. We apply HOSVD on $\tX$ to obtain the single dimensional bases $\mathbf u_{1} \in \mathbb R^{10 \times 1}$, $\mathbf u_{2} \in \mathbb R^{10 \times 1}$, and $\mathbf u_{3} \in \mathbb R^{10 \times 1}$ and measure the aggregate normalized fitting of the bases to the corrupted fibers as 
$
f(\mu^2)=\sum_{i=1}^3 { |\mathbf u_i^\top \mathbf x_i|^2}{\| \mathbf x_{i}\|_2^{-2}},
$
 where $\mathbf x_{1}= [\tX]_{:,3,4}$, $\mathbf x_{2}= [\tX]_{2,:,4}$, and $\mathbf x_{2}= [\tX]_{2,3,:}$. We repeat this study $3000$ times and plot in Figure \ref{fig:outfit} the average value of $f(\mu^2)$, versus $\mu^2=0,10,\ldots, 100$. The impact of the single corrupted entry to the HOSVD bases (which tend to fit the corrupted fibers) is clearly documented. 

\begin{figure}[t!]
	
	\centering
	\includegraphics[width=0.7\textwidth]{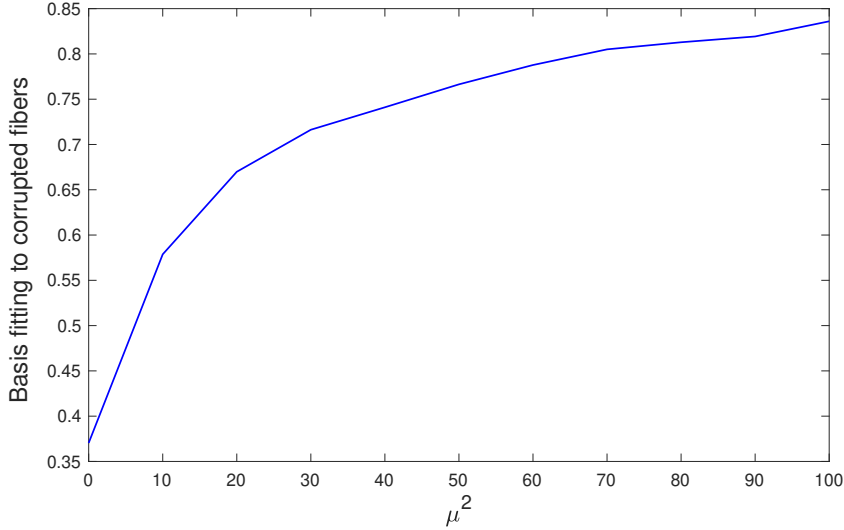}
	
	\caption{Fitting of HOSVD solution to corrupted fibers,  versus corruption variance $\mu^2$. }

	\label{fig:outfit}
\end{figure}

To counteract the effect of data contaminations/corruptions, researchers have resorted in ``robust'' reformulations of PCA and Tucker. 
One popular approach  seeks  to approximate the processed data tensor as the summation of a sought-after low-rank component and a jointly optimized sparse component that models outlier corruption  \cite{karamanis, lu2016tensor, goldfarb,li2015low}. This approach relies on ad hoc configured weights that regulate approximation rank,  sparsity, and iteration step size.
A rather more straightforward approach simply replaces the corruption-responsive L2-norm in PCA by the  L1-norm.
 In matrix analysis, this  modification resulted to the   L1-PCA formulation \cite{PPMTSP2014}, which constitutes a core component of the  L1-Tucker framework proposed in this work. Therefore, for completeness, we briefly present  below the theory behind L1-PCA, as well as a simple algorithm for its approximate computation.

Given a  data matrix $\mathbf X \in \mathbb R^{D_1  \times D_2}$  and $d_1\leq\text{rank}(\mathbf X)$, L1-PCA is defined as 
\begin{align}
\underset{
\begin{smallmatrix}
\mathbf U \in \mathbb S(D_1,d_1)
\end{smallmatrix}
}{\text{max.}}
~
\| \mathbf U^\top \mathbf X\|_1,
\label{l1pca}
\end{align}
where the L1-norm $\| \cdot \|_1$ returns the summation of the absolute entries of its matrix argument.
L1-PCA in \eqref{l1pca} was solved exactly  in \cite{PPMTSP2014}, where authors presented and leveraged the following Theorem \ref{l1pcaprop}.

\begin{theorem} 
Let $\mathbf B_{\text{\emph{nuc}}}$ be an optimal solution to 
\begin{align}
\underset{\mathbf B \in \{\pm 1\}^{D_2 \times d_1}}{\text{\emph{max.}}} \| \mathbf X \mathbf B\|_*.
\label{bnuc}
\end{align}
Then, $\mathbf U_{\text{\emph{L1}}} = \Phi(\mathbf X \mathbf B_{\text{\emph{nuc}}})$ is an optimal solution to L1-PCA in \eqref{l1pca}. Moreover, $\| \mathbf X^\top \mathbf U_{\text{\emph{L1}}}\|_1= \text{\emph{Tr}}\left(  \mathbf U_{\text{\emph{L1}}}^\top \mathbf X \mathbf B_{\text{\emph{nuc}}}\right) = \| \mathbf X \mathbf B_{\text{\emph{nuc}}}\|_*$  \emph{\cite{PPMTSP2014}}.
\label{l1pcaprop}
\end{theorem}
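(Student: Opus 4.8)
The plan is to turn the L1 objective in \eqref{l1pca} into a bilinear trace form by introducing auxiliary sign variables, and then to recognize the resulting inner optimization over the Stiefel manifold as an orthogonal Procrustes problem whose optimal value is the nuclear norm. First I would expose the entrywise structure of the objective. Since $\| \mathbf U^\top \mathbf X\|_1 = \sum_{i,j} |(\mathbf U^\top \mathbf X)_{ij}|$ and $|a| = \max_{b \in \{\pm 1\}} ab$, applying this scalar identity independently to each entry and collecting the optimal signs into a matrix $\mathbf B \in \{\pm 1\}^{D_2 \times d_1}$ (with $\mathbf B_{ji}$ matching the sign of $(\mathbf U^\top \mathbf X)_{ij}$) yields the key identity
\[
\| \mathbf U^\top \mathbf X\|_1 = \max_{\mathbf B \in \{\pm 1\}^{D_2 \times d_1}} \mathrm{Tr}\!\left( \mathbf U^\top \mathbf X \mathbf B \right),
\]
because the diagonal of $\mathbf U^\top \mathbf X \mathbf B$ collects exactly the sign-weighted sums $\sum_j (\mathbf U^\top \mathbf X)_{ij}\mathbf B_{ji}$. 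Substituting into \eqref{l1pca} and swapping the order of the two maximizations (both over compact, here finite-or-Stiefel, sets) gives
\[
\max_{\mathbf U \in \mathbb S(D_1,d_1)} \| \mathbf U^\top \mathbf X\|_1 = \max_{\mathbf B \in \{\pm 1\}^{D_2 \times d_1}} \; \max_{\mathbf U \in \mathbb S(D_1,d_1)} \mathrm{Tr}\!\left( \mathbf U^\top (\mathbf X \mathbf B) \right).
\]

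Second, I would resolve the inner maximization for a fixed $\mathbf B$. Writing the thin SVD $\mathbf X \mathbf B = \mathbf U_0 \mathbf \Sigma_0 \mathbf V_0^\top$ with singular values $\sigma_k$ and setting $\mathbf Z := \mathbf V_0^\top \mathbf U^\top \mathbf U_0$, von Neumann's trace inequality gives
\[
\mathrm{Tr}\!\left( \mathbf U^\top \mathbf X \mathbf B \right) = \sum_k \mathbf Z_{kk}\,\sigma_k \leq \sum_k \sigma_k = \| \mathbf X \mathbf B\|_*,
\]
since each diagonal entry of $\mathbf Z$ is bounded by one in magnitude (as $\mathbf Z$ is a product of matrices with orthonormal columns). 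Equality holds precisely at $\mathbf U = \mathbf U_0 \mathbf V_0^\top = \Phi(\mathbf X \mathbf B)$, the nearest orthonormal-column matrix returned by $\Phi(\cdot)$. Hence the inner value equals the nuclear norm $\| \mathbf X \mathbf B\|_*$, and the entire L1-PCA problem reduces to the combinatorial program \eqref{bnuc}.

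Third, I would assemble the conclusion with a sandwich argument. By definition $\mathbf B_{\text{nuc}}$ maximizes $\| \mathbf X \mathbf B\|_*$, so the common optimal value of L1-PCA is $\| \mathbf X \mathbf B_{\text{nuc}}\|_*$, and by the Procrustes step it is attained at $\mathbf U_{\text{L1}} = \Phi(\mathbf X \mathbf B_{\text{nuc}})$, which establishes optimality of $\mathbf U_{\text{L1}}$. For the final chain, note that $\| \mathbf X^\top \mathbf U_{\text{L1}}\|_1 = \| \mathbf U_{\text{L1}}^\top \mathbf X\|_1$ by transpose-invariance of the entrywise L1 norm. On one hand, the first identity of this plan gives $\| \mathbf U_{\text{L1}}^\top \mathbf X\|_1 \geq \mathrm{Tr}( \mathbf U_{\text{L1}}^\top \mathbf X \mathbf B_{\text{nuc}}) = \| \mathbf X \mathbf B_{\text{nuc}}\|_*$, where the inequality holds because the L1 norm is the maximum over all sign matrices and hence at least its value at $\mathbf B_{\text{nuc}}$, and the equality is the Procrustes optimality at $\mathbf U_{\text{L1}} = \Phi(\mathbf X \mathbf B_{\text{nuc}})$. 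On the other hand, $\| \mathbf U_{\text{L1}}^\top \mathbf X\|_1$ cannot exceed the optimal value $\| \mathbf X \mathbf B_{\text{nuc}}\|_*$. The two bounds force equality throughout, yielding the stated identities.

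I expect the only delicate point to be the equality-condition bookkeeping rather than any hard estimate: the sign reformulation and the trace bound are each elementary, but one must confirm that $\Phi(\mathbf X \mathbf B_{\text{nuc}})$ simultaneously attains the inner Procrustes maximum and that the outer sign-optimization at this fixed $\mathbf U_{\text{L1}}$ is itself solved by $\mathbf B_{\text{nuc}}$ — which is exactly what the sandwich inequality certifies.
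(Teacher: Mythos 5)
Your proof is correct, and it follows essentially the route the paper itself records: the sign-matrix trace reformulation and the Procrustes/nuclear-norm step are exactly the identities the paper states in \eqref{st2} and \eqref{proc}, combined with a swap of the two maximizations and a sandwich argument. Note that the paper never actually proves Theorem~\ref{l1pcaprop} --- it imports it from \cite{PPMTSP2014} --- so your argument is a correct reconstruction of that cited proof; the only nitpick is that ``equality holds precisely at $\mathbf U = \mathbf U_0 \mathbf V_0^\top$'' overstates uniqueness (when $\mathbf X \mathbf B$ is rank deficient the Procrustes maximizer is not unique), but attainment, which is all your argument uses, does hold there.
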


Nuclear norm $\| \cdot\|_*$  in \eqref{bnuc} returns the sum of the singular values of its matrix argument. For any tall matrix $\mathbf A \in \mathbb R^{D \times d}$ that admits SVD $\mathbf A = \mathbf W \mathbf S_{d \times d} \mathbf Q^\top$,   $\Phi(\cdot)$ in Theorem  \ref{l1pcaprop} is defined as $\Phi(\mathbf A) :=\mathbf W \mathbf Q^\top$. Moreover, by the Procrustes Theorem \cite{procrustes}, it holds that 
\begin{align}
\Phi(\mathbf A) \in \underset{\mathbf U \in \mathbb S(D,d)}{\text{argmax}}~{\text{Tr}(\mathbf U^\top \mathbf A)} = \underset{\mathbf U \mathbb \in \mathbb  S(D,d)}{\text{argmin}}~{ \| \mathbf U - \mathbf A \|_F}. 
\label{proc}
\end{align}

By means of the above Theorem \ref{l1pcaprop}, the solution to L1-PCA is obtained by the solution to \eqref{bnuc}, with an additional SVD step.
 \eqref{bnuc} can be solved by exhaustive search in its finite-size feasibility set, or more intelligent algorithms of lower cost, as shown in \cite{PPMTSP2014}. Computationally efficient,  approximate solvers  for \eqref{bnuc} and \eqref{l1pca} were  presented  in \cite{PPMTSP2017, arlington, kwak, tropp, savakis2019}.  Incremental solvers for L1-PCA were presented in \cite{panos2018, pados2016}.
 Algorithms for the complex-valued  L1-PCA were recently presented in \cite{complexL1, panos2018v2}. To date,  L1-PCA has found many applications in signal processing and machine learning, such as  radar-based  motion recognition and  foreground-activity extraction in  video sequences \cite{YLTM2016, PPMRADAR2017}. 
Most recently, L1-PCA was extended to L1-norm-based Tucker2 formulation, specifically for $3$-way tensors \cite{DGCSPIE2018, DGCICASSP2018}.

Next, we briefly present  the low-cost  L1-PCA calculator of \cite{arlington}, based on alternating optimization, which will be employed as a module of our subsequent L1-Tucker decomposition  developments.  A pseudocode for this L1-PCA solver is offered in Algorithm \ref{fig:l1pca}.
According to  \cite{PPMTSP2014}, 
\begin{align}
\underset{
\begin{smallmatrix}
\mathbf U \in \mathbb S(D_1 , d_1) 
\end{smallmatrix}
}{\text{max}}
~
\| \mathbf U^\top \mathbf X \|_1
 & =
\underset{
\begin{smallmatrix}
\mathbf U \in \mathbb S(D_1 , d_1 ) \\
\mathbf B \in \{ \pm 1\}^{D_2 \times d_1}
\end{smallmatrix}
}{\text{max}}
~
\text{Tr} \left( \mathbf U^\top  \mathbf X  \mathbf B \right) \\ 
& =
\underset{
\begin{smallmatrix}
\mathbf B \in \{ \pm 1\}^{D_2 \times d_1}
\end{smallmatrix}
}{\text{max}}
~
\| \mathbf X^\top \mathbf B \|_*
\label{st2}.
\end{align}
By \eqref{proc}, for  fixed $\mathbf B$,  the middle part of
 \eqref{st2} is maximized by $\mathbf U = \Phi(\mX \mathbf B)$. At the same time, for fixed  $\mathbf U$,  the middle part of \eqref{st2} is maximized by $\mathbf B = \text{sgn} (\mX^\top  \mathbf U )$, where  $\text{sgn}(\cdot)$ returns the $\pm 1$ signs of the entries of its argument ($\text{sgn}(0)=1$). By the above observations,  \cite{arlington} pursued a solution to \eqref{l1pca} in an alternating fashion, as 
 \begin{align}
 \mathbf B_{t} = \text{sgn}(\mX^\top \mathbf U_{t-1}) \in
  \underset{\mathbf B \in \{ \pm 1\}^{ D_2 \times d_1}}{\text{argmax}}   \text{Tr} \left( \mathbf U_{t-1}^\top  \mX \mathbf B \right) 
  \label{iter11}
   \end{align}
   and
    \begin{align}
\mathbf U_{t} 
=    \Phi(\mX \mathbf B_{t}) 
\in \underset{ 
\begin{smallmatrix}
\mathbf U \in \mathbb S(D_1 , d_1) 
\end{smallmatrix}
  }{\text{argmax}}  \text{Tr} \left( \mathbf U^\top \mX  \mathbf B_{t} \right),
   \label{iter21}
 \end{align}  
for $t=1, 2, \ldots$, and arbitrary initialization $\mathbf U_{0} \in \mathbb S({D, d})$. Omitting the explicit computation of the auxiliary matrix $\mathbf B_{t}$, \eqref{iter11}-\eqref{iter21} can take the  compact form
\begin{align}
\mathbf U_{t}= \Phi \left( \mX \text{sgn}( \mX^\top \mathbf U_{t-1}) \right).
\label{approxL1PCA}
\end{align}

\begin{algorithm}[t!]
 
		{ \texttt{L1PCA-AO}($\mathbf X$, $\mathbf U$)}
		
		{\hrule height 0.2mm}
		\vspace{0.05cm}
		\textbf{Input:} $\mathbf X \in \mathbb R^{D_1 \times D_2} $,  $\mathbf U \in \mathbb R^{D_1 \times d_1}$\\
		\begin{tabular}{r l l}
		1: & Until termination/convergence \\
		2: & ~~~~~~  $\mathbf B  \leftarrow  \text{sgn}(  \mathbf X^\top \mathbf U)  $		\\
		3: 	 & ~~~~~~  $\mathbf U  \leftarrow \Phi \left( \mathbf X \mathbf B \right)$
		\end{tabular} \\
		\textbf{Return:} $\mathbf U$ 
 
	\caption{\small{Alternating-optimization algorithm for the solution of L1-PCA \cite{arlington}. }
		}
 
	\label{fig:l1pca}
\end{algorithm}%

For completeness, we present below a proof of the monotonic metric  increase attained by the above iterations.
\begin{lemma}
For every $t \geq 1$,
\begin{align}
\|{\mathbf U_{t-1}}^\top \mX \|_1 
& = 
 \text{\emph{Tr}}({\mathbf U_{t-1}}^\top \mX \mathbf B_{t}) 
\\ & \leq 
  \text{\emph{Tr}}({\mathbf U_{t}}^\top \mX \mathbf B_{t}) 
\\ &  \leq
   \|{\mathbf U_{t}}^\top \mX \|_1.
   \label{key}
\end{align} 
\label{lemma1}
\end{lemma}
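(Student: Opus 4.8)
The plan is to read the chain in \eqref{key} as the record of one alternating-maximization sweep and to certify each link by an ``optimizer-versus-feasible-point'' comparison. The engine behind everything is the pair of elementary subproblems already isolated in \eqref{iter11}--\eqref{iter21}, so first I would record those two facts explicitly. For any fixed $\mathbf U \in \mathbb S(D_1,d_1)$, the map $\mathbf B \mapsto \text{Tr}(\mathbf U^\top \mX \mathbf B)=\sum_{i,j}(\mX^\top \mathbf U)_{ij}\mathbf B_{ij}$ is separable across the entries of $\mathbf B$, so it is maximized over $\{\pm 1\}^{D_2\times d_1}$ entrywise by $\mathbf B=\text{sgn}(\mX^\top \mathbf U)$, with optimal value $\sum_{i,j}|(\mX^\top\mathbf U)_{ij}|=\|\mathbf U^\top \mX\|_1$. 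Dually, for any fixed $\mathbf B$, the Procrustes identity \eqref{proc} gives $\Phi(\mX\mathbf B)\in\text{argmax}_{\mathbf U\in\mathbb S(D_1,d_1)}\text{Tr}(\mathbf U^\top \mX\mathbf B)$.

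With these in hand, the three links follow immediately. For the leftmost equality, I substitute the definition $\mathbf B_t=\text{sgn}(\mX^\top \mathbf U_{t-1})$ into the first fact with $\mathbf U=\mathbf U_{t-1}$, which says precisely that $\text{Tr}(\mathbf U_{t-1}^\top \mX \mathbf B_t)=\|\mathbf U_{t-1}^\top \mX\|_1$. For the middle inequality, I use that $\mathbf U_t=\Phi(\mX\mathbf B_t)$ is a global maximizer of $\mathbf U\mapsto\text{Tr}(\mathbf U^\top \mX\mathbf B_t)$ over the Stiefel manifold while $\mathbf U_{t-1}$ is merely a feasible point of the same problem, whence $\text{Tr}(\mathbf U_{t-1}^\top \mX\mathbf B_t)\le\text{Tr}(\mathbf U_t^\top \mX\mathbf B_t)$. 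For the rightmost inequality, I hold $\mathbf U_t$ fixed and observe that $\mathbf B_t$ is only a feasible sign matrix for this basis (it was tailored to $\mathbf U_{t-1}$, not to $\mathbf U_t$); applying the first fact with $\mathbf U=\mathbf U_t$ then yields $\text{Tr}(\mathbf U_t^\top \mX\mathbf B_t)\le\max_{\mathbf B}\text{Tr}(\mathbf U_t^\top \mX\mathbf B)=\|\mathbf U_t^\top \mX\|_1$.

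I do not anticipate a genuine obstacle here: the result is a textbook alternating-maximization ``sandwich'', and the only points demanding care are the entrywise separability of the sign subproblem and the correct invocation of the Procrustes theorem, both of which are supplied by the surrounding text. The one thing I would watch is the bookkeeping of the transpose and the sign convention $\text{sgn}(0)=1$, so that the first link is an exact equality rather than an inequality. Finally, I would note as a corollary that, since $\|\mathbf U^\top \mX\|_1$ is continuous on the compact Stiefel manifold and hence bounded above, the monotone sequence $\{\|\mathbf U_t^\top \mX\|_1\}_t$ produced by \eqref{key} converges, which is the property the iteration is ultimately invoked for.
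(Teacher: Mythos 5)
Your proof is correct and follows essentially the same route as the paper, which justifies the chain via the two subproblem optimality facts recorded in \eqref{iter11}--\eqref{iter21}: the entrywise sign maximization $\mathbf B_t = \text{sgn}(\mX^\top \mathbf U_{t-1})$ achieving $\|\mathbf U_{t-1}^\top \mX\|_1$, the Procrustes optimality of $\mathbf U_t = \Phi(\mX \mathbf B_t)$, and the feasibility of $\mathbf B_t$ in the sign subproblem for $\mathbf U_t$. Your write-up merely makes explicit the details the paper leaves implicit in the discussion surrounding \eqref{st2}--\eqref{approxL1PCA}, so no gap exists.
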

At the same time, the metric of \eqref{l1pca} is upper bounded by the exact L1-PCA solution  \cite{PPMTSP2014}. 
Thus, the  iteration in \eqref{approxL1PCA} is guaranteed to converge. 
In practice, iterations can be terminated when the metric-increase ratio 
\begin{align}
\frac{\|{\mathbf U_{t}}^\top  \mX \|_1 - \|{\mathbf U_{t-1}}^\top   \mX \|_1}{\|{\mathbf U_{t-1}}^\top  \mX \|_1}
\end{align}
 drops below a predetermined threshold $\tau>0$, or when $t$ exceeds a maximum number of permitted iterations. As shown in \cite{arlington},  the computational cost of the presented procedure is $\mathcal O(D_1 D_2 d_1 T)$, where $T$ is the number of iterations. In practice, $T$ can be set to be linear in $D_1$. Thus, the overall computational complexity of the above procedure is $\mathcal O(D_1^2 D_2 d_1)$.
In the sequel, we adopt the function notation $\texttt{L1PCA-AO}(\mathbf X, \mathbf U_{0})$ to denote the basis returned upon termination/converge of the alternating optimization of \eqref{approxL1PCA}.

\section{L1-Tucker Decomposition}

\subsection{Formulation}

Motivated by the corruption resistance of L1-PCA, in this work we study   {L1-Tucker} decomposition. L1-Tucker derives by simply replacing the L2-norm in \eqref{Tucker} by the  sturdier L1-norm, as
\begin{align}
\underset{
\{\mathbf U_n \in \mathbb S(D_n , d_n) \}_{n \in [N]} }{\text{max.}}
\left\| \tX \textstyle  \times_{n \in [N]} \mathbf U_{n}^\top \right\|_1.
\label{l1Tucker}
\end{align}
That is, \eqref{l1Tucker} strives to maximize  the sum of the absolute entries of the Tucker core --while standard Tucker maximizes the  sum of the squared entries of the core.
We note that, for any $m \in [N]$, 
\begin{align}
\left\| \tX \textstyle  \times_{n \in [N]} \mathbf U_{n}^\top \right\|_1 = 
\left\|  \mathbf U_{m}^\top \mathbf A_m \right\|_1,
\end{align}
where $\mathbf A_m = [\tX \times_{n < m} \mathbf U_n^\top  \times_{k >m} \mathbf U_k^\top ]_{(m)}$. Thus, with respect to each individual basis, the metric of L1-Tucker resembles that of L1-PCA.

For $N=3$,  $d_3=D_3$, and fixed $\mathbf U_3=\mathbf I_{D_3}$, L1-Tucker  in \eqref{l1Tucker} simplifies to a special L1-Tucker2 decomposition, proposed and studied in \cite{PANG2010, DGCICASSP2018}. For the even more special case of $d_1=d_2=1$, L1-Tucker2 was recently solved exactly in \cite{PPMSPL2018} through combinatorial optimization. For $N=2$ and fixed $\mathbf U_2=\mathbf I_{D_2}$, L1-Tucker simplifies to L1-PCA, in the form of \eqref{l1pca}. The above works have shown that, even in its special cases, the exact solution to L1-Tucker is hard to find --which also holds true for standard Tucker decomposition.
 
Next, we present the first two approximate algorithms for solving the general L1-Tucker decomposition, in the form of \eqref{l1Tucker}.


\subsection{Proposed L1-HOSVD Method}

We first present L1-HOSVD, an algorithm analogous to HOSVD for standard Tucker. 
Specifically, for every  $n \in [N]$,  L1-HOSVD seeks to optimize the mode-$n$ basis $\mathbf U_{n}$ in \eqref{l1Tucker} individually, by  L1-PCA solution (exact or approximate) of the mode-$n$ matrix unfolding of $\tX$, $[\tX]_{(n)}$. That is, 
L1-HOSVD approximates the mode-$n$ basis in the solution of  \eqref{l1Tucker} by   
\begin{align}
\mathbf U_{n}^{\text{l1-hosvd}} \in
\underset{
\begin{smallmatrix}
\mathbf U \in \mathbb S(D_n , d_n) 
\end{smallmatrix}
}{\text{argmax}}
~
\left\| \mathbf U^\top [\tX]_{(n)} \right\|_1.
\label{l1pca2}
\end{align}
Similar to HOSVD, L1-HOSVD  decouples  the basis optimization task across the modes.
 We observe that \eqref{l1pca2} is, in fact, L1-PCA of the mode-$n$ flattening of $\tX$. 
 As mentioned above, there are multiple algorithms in the literature for solving L1-PCA in \eqref{l1pca2}, both exactly and approximately \cite{PPMTSP2014, PPMTSP2017, arlington}. 
 As an L1-Tucker solution framework,  L1-HOSVD allows for the employment of any solver for \eqref{l1pca2}, thus making possible different performance/cost trade-offs.
In this work,  we demonstrate the employment of  the simple alternating-optimization method of Algorithm  \ref{fig:l1pca},  initialized, for example, at the solution of standard HOSVD. That is, L1-HOSVD returns
\begin{align}
{\mathbf U}^{\text{l1-hosvd}}_{n} = \texttt{L1PCA-AO} ([\tX]_{(n)}, \mathbf U^{\text{hosvd}}_{n} ),
\label{l1pcatuck}
\end{align}
for every $n \in [N]$.  As presented in Section \ref{sec_l1pca},  the computational cost of \eqref{l1pcatuck} is  $\mathcal O(D_n d_n P)$, where $P:= \prod_{m \in [N]}D_m$, when the number of L1-PCA iterations is linear in $D_n$.
A pseudocode of   L1-HOSVD  is offered in Algorithm \ref{algo}.

\begin{algorithm}[t!]
 
		{\texttt{L1HOSVD}($\tX$, $\{d_{n}\}_{n \in [N]}$)}
		
		{\hrule height 0.2mm}
		\vspace{0.05cm}
		\textbf{Input:} $\tX \in \mathbb R^{D_1 \times \ldots \times D_N},  \{d_{n}\}_{n \in [N]} $\\
		\begin{tabular}{r l l}
		1: & Initialize $\{ \mathbf U_{n}  \}_{n \in [N]}$ by HOSVD of $\tX$ for   $\{d_{n}\}_{n \in [N]} $ \\
		2: & For $n=1, 2, \ldots, N$ \\
		3: & ~~~~~~ $\mathbf U_n \leftarrow$ \texttt{L1PCA-AO}($[\tX]_{(n)}$, $\mathbf U_n$) 
		\end{tabular} \\
		\textbf{Return:} $\{ \mathbf U_{n}  \}_{n \in [N]}$ 
 
	\caption{\small{The proposed L1-HOSVD algorithm for solving  L1-Tucker in \eqref{l1Tucker}.}
	}
 
	\label{algo}
\end{algorithm}%

\subsection{Proposed L1-HOOI Method}

Similar to HOSVD, for general dense processed tensors,  the disjoint mode optimization  of L1-HOSVD may be very limiting. Next,   we present  {L1-HOOI}, an iterative algorithm for approximating the solution to   \eqref{l1Tucker}. Specifically, L1-HOOI is arbitrarily initialized to   $N$ feasible bases and conducts a sequence of iterations in   which  it updates all bases $\{\mathbf{U}_n \}_{n \in [N]}$ such that the objective value of L1-Tucker  increases. Thus, when initialized to the L1-HOSVD bases, L1-HOOI is guaranteed to outperform L1-HOSVD in the L1-Tucker metric. In this approach, L1-HOSVD can be viewed as an initialization for L1-HOOI, or, conversely, L1-HOOI can be viewed as a refinement of L1-HOSVD.

Formally, L1-HOOI first initializes bases $\{ \mathbf U_n^{(0)} \in \mathbb S({D_n, d_n})\}_{n \in [N]}$  --for example, $ \mathbf U_n^{(0)} = \mathbf U_n^{\text{l1-hosvd}}$. 
Then, at the $q$-th iteration, $q=1,2,\ldots$,  it successively optimizes  all $N$  bases,  in increasing mode order $n=1,2,\ldots$. Specifically,  at a given iteration $q$ and mode index $n$, L1-HOOI   fixes $\mathbf U_{m}^{(q)}$ for $m<n$ and $\mathbf U_{k}^{(q-1)}$ for $k>n$ and seeks the mode-$n$ basis $\mathbf U_{n}^{(q)}$ that maximizes the L1-Tucker metric. 
That is, for a given index pair $(q,n)$, 
L1-HOOI pursues    
 \begin{align}
  \mathbf U_n^{(q)} \in
\underset{
 \mathbf U \in \mathbb S(D_n , d_n)}
 {\text{argmax}}
\left\| \tX   \times_{m <n} {\mathbf U_{m}^{(q)}}^\top \times_n \mathbf U^\top  \times_{k>n} {\mathbf U_{k}^{(q-1)}}^\top \right\|_1.
\label{L1HOOIupdt}
\end{align}
Defining
 \begin{align}
\mathbf A_n^{(q)}\eqdef \left[\tX   \times_{m < n}  {\mathbf U_m^{(q)}}^\top    \times_{k > n} {\mathbf U_k^{(q-1)}}^\top \right]_{(n)},
\end{align}
\eqref{L1HOOIupdt} is equivalently rewritten in the familiar L1-PCA form
 \begin{align}
  \mathbf U_n^{(q)} \in 
  \underset{
 \mathbf U \in \mathbb S(D_n , d_n) }{\text{argmax}}{\left\| {\mathbf U}^\top \mathbf A_n^{(q)}\right\|_1}.
\label{l1hooi2}
\end{align}
We notice that, in contrast to \eqref{l1pca2},  the metric of \eqref{l1hooi2}  involves the jointly optimized bases of the other modes. 
As discussed above, there are multiple solvers  for \eqref{l1hooi2} that can attain different performance/cost trade-offs. The proposed L1-HOOI framework can be combined with any L1-PCA solver.  In the sequel, we employ again the iterative solver of  Algorithm  \ref{fig:l1pca}.  That is,  for any   ($q,n$), we  set  
\begin{align}
\mathbf U_{n}^{(q)} = \texttt{L1PCA-AO}(\mathbf A_{n}^{(q)}, ~ \mathbf U_{n}^{(q-1)}).
\label{update1}
\end{align}
A pseudocode of the proposed L1-HOOI method is offered in Algorithm \ref{algo2}. 
A formal convergence analysis of the L1-HOOI iterations is presented below. 

\begin{algorithm}[t!]
	 
		{\texttt{L1HOOI}($\tX$, $\{d_{n}\}_{n \in [N]}$)}
		
		{\hrule height 0.2mm}
		\vspace{0.06cm}	
		\textbf{Input:} $\tX \in \mathbb R^{D_1 \times \ldots \times D_N},  \{d_{n}\}_{n \in [N]} $\\
		\begin{tabular}{r l l}
			1: & Initialize $\{ \mathbf U_{n} \}_{n \in [N]} \leftarrow$ \texttt{L1HOSVD}($\tX$,  $\{d_{n}\}_{n \in [N]}$) \\
		    2: & Until termination/convergence \\
			3: & ~~~~~~ For $n=1, 2, \ldots, N$ \\
			4: & ~~~~~~~~~~~~~~~ $\mathbf A \leftarrow [\tX   \times_{m < n} {\mathbf U_m}^\top    \times_{k > n}  {\mathbf U_k}^\top ]_{(n)}$ \\
			5: & ~~~~~~~~~~~~~~~ 	$\mathbf U_n \leftarrow$ \texttt{L1PCA-AO}($\mathbf A$, $\mathbf U_n$) 
		\end{tabular} \\
		\textbf{Return:} $\{ \mathbf U_{n} \}_{n \in [N]}$ 
	\caption{\small{The proposed L1-HOOI algorithm for solving  L1-Tucker in \eqref{l1Tucker}.}
	}
	\label{algo2}
\end{algorithm}%

\subsubsection{Convergence}
\label{conv}
 

We start with introducing Lemma \ref{remark1}, which shows that the $q$-th update of the mode-$n$ basis increases the L1-Tucker metric.

\begin{lemma}
Lemma \ref{lemma1} implies that, for fixed $\{\mathbf U_m^{(q)}\}_{m<n}$ and $\{\mathbf U_k^{(q-1)}\}_{k>n}$,  
\begin{align}
 \left\|{\mathbf U_{n}^{(q)}}^\top\mathbf A_n^{(q)}\right\|_1  
   \geq   \left\|{\mathbf U_n^{(q-1)}}^\top\mathbf A_n^{(q)}\right\|_1.
\end{align}
  \label{remark1}
\end{lemma}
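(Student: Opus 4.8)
The plan is to recognize that the claimed inequality merely states that the internal \texttt{L1PCA-AO} call appearing in \eqref{update1} does not decrease its own objective, and then to obtain this monotonicity by telescoping Lemma \ref{lemma1} over the inner iterations. First I would fix the index pair $(q,n)$ and hold the surrounding bases $\{\mathbf U_m^{(q)}\}_{m<n}$ and $\{\mathbf U_k^{(q-1)}\}_{k>n}$ constant, so that the matrix $\mathbf A_n^{(q)}$ is a single, well-defined quantity throughout the argument. The key observation is that, by construction in \eqref{update1}, $\mathbf U_n^{(q)}$ is the output of $\texttt{L1PCA-AO}(\mathbf A_n^{(q)}, \mathbf U_n^{(q-1)})$, i.e., the result of running the alternating recursion \eqref{approxL1PCA} on the data matrix $\mathbf A_n^{(q)}$ with initialization $\mathbf U_n^{(q-1)}$.

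Next I would instantiate Lemma \ref{lemma1} with the data matrix $\mX \leftarrow \mathbf A_n^{(q)}$ and with the inner iterates $\mathbf U_0 = \mathbf U_n^{(q-1)}, \mathbf U_1, \ldots, \mathbf U_T = \mathbf U_n^{(q)}$ produced by that \texttt{L1PCA-AO} call. Lemma \ref{lemma1} then guarantees $\|\mathbf U_{t-1}^\top \mathbf A_n^{(q)}\|_1 \leq \|\mathbf U_t^\top \mathbf A_n^{(q)}\|_1$ for each internal step $t=1,\ldots,T$. Telescoping these $T$ inequalities gives
\[
\|{\mathbf U_0}^\top \mathbf A_n^{(q)}\|_1 \leq \|{\mathbf U_T}^\top \mathbf A_n^{(q)}\|_1,
\]
and substituting $\mathbf U_0 = \mathbf U_n^{(q-1)}$ and $\mathbf U_T = \mathbf U_n^{(q)}$ yields exactly the asserted bound. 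The degenerate case $T=0$, in which no inner step is executed and the output equals the initialization, is covered trivially, since then the inequality holds with equality.

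Since every step is a direct application of an already-established result, I do not anticipate a genuine obstacle. The only point requiring care is the bookkeeping that links the ``outer'' L1-HOOI notation $\mathbf U_n^{(q-1)}, \mathbf U_n^{(q)}$ to the ``inner'' \texttt{L1PCA-AO} iterates $\mathbf U_0, \mathbf U_T$, together with the verification that the fixed surrounding bases influence the argument solely through $\mathbf A_n^{(q)}$. This ensures that the single, unchanging data matrix presumed by Lemma \ref{lemma1} is indeed held constant across the entire inner loop, so that the telescoping is legitimate.
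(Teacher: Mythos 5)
Your proof is correct and follows exactly the paper's intended argument: the paper states this lemma as an immediate consequence of Lemma \ref{lemma1}, since by \eqref{update1} the basis $\mathbf U_n^{(q)}$ is the output of \texttt{L1PCA-AO} run on the fixed matrix $\mathbf A_n^{(q)}$ initialized at $\mathbf U_n^{(q-1)}$, and telescoping the per-iteration monotonicity of Lemma \ref{lemma1} yields the claim. Your writeup merely makes explicit (including the degenerate zero-iteration case) what the paper leaves implicit, so there is no gap and no difference in approach.
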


We note that  Lemma \ref{remark1} would also hold if, instead of   \eqref{update1}, we employed the bit-flipping iterative L1-PCA solver of \cite{PPMTSP2017}, initialized  to ${\mathbf U_n^{(q-1)}}$. Also, Lemma \ref{remark1} certainly holds true if ${\mathbf U_n^{(q)}}$ is computed by the exact solution of  \eqref{l1hooi2} --that is, by means of the exact algorithms of \cite{PPMTSP2014}. The following new Lemma \ref{lemma:hooi1} shows that, within the same iteration, the metric increases as we successively optimize the bases.

\begin{lemma}
	For any   $q > 0$ and every $n>m \in [N]$, it holds that
	\begin{align}
	\left\|{\mathbf U_n^{(q)}}^\top \mathbf A_n^{(q)}\right\|_1 \geq 	\left\| {\mathbf U_m^{(q)}}^\top \mathbf A_m^{(q)}\right\|_1.
	\end{align}
	\label{lemma:hooi1}
\end{lemma}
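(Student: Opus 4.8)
The plan is to establish the inequality by a telescoping argument, reducing the within-iteration monotonicity of the metric to the single-mode monotonicity already established in Lemma~\ref{remark1}. The central observation is that, for any fixed collection of bases $\{\mathbf V_j\}_{j \in [N]}$, the full L1-Tucker metric $\| \tX \times_{j \in [N]} \mathbf V_j^\top \|_1$ is merely the sum of the absolute entries of the core tensor, and is therefore invariant to the choice of unfolding mode. Consequently, as noted right below \eqref{l1Tucker}, for every $n$ it admits the representation $\| \mathbf V_n^\top \mathbf B_n \|_1$ with $\mathbf B_n = [\tX \times_{j \neq n} \mathbf V_j^\top]_{(n)}$, since $[\tX \times_{j \in [N]} \mathbf V_j^\top]_{(n)} = \mathbf V_n^\top [\tX \times_{j \neq n} \mathbf V_j^\top]_{(n)}$. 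I will use this identity to read the \emph{same} metric value through two different unfoldings.

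First I would introduce, for each mode $n$, the configuration of bases that is active immediately after mode $n$ has been updated within iteration $q$, namely $\mathbf U_j^{(q)}$ for $j \leq n$ together with $\mathbf U_j^{(q-1)}$ for $j > n$; I denote the corresponding metric value by $g_n$. By the unfolding identity applied at mode $n$, this value equals exactly $\| {\mathbf U_n^{(q)}}^\top \mathbf A_n^{(q)} \|_1$, which is precisely the left-hand quantity in the Lemma. The goal then becomes showing $g_m \leq g_n$ whenever $n > m$.

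The key step is to relate $g_m$ and $g_{m+1}$. These two configurations are identical except in mode $m+1$: the former still carries $\mathbf U_{m+1}^{(q-1)}$, while the latter carries the updated $\mathbf U_{m+1}^{(q)}$. I would re-express $g_m$ through the mode-$(m+1)$ unfolding. Because the resulting context matrix uses $\mathbf U_j^{(q)}$ for $j < m+1$ and $\mathbf U_k^{(q-1)}$ for $k > m+1$, it coincides exactly with $\mathbf A_{m+1}^{(q)}$; hence $g_m = \| {\mathbf U_{m+1}^{(q-1)}}^\top \mathbf A_{m+1}^{(q)} \|_1$. On the other hand, $g_{m+1} = \| {\mathbf U_{m+1}^{(q)}}^\top \mathbf A_{m+1}^{(q)} \|_1$ by definition. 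Lemma~\ref{remark1}, invoked at mode $m+1$ with the update $\mathbf U_{m+1}^{(q)}$ computed from the previous basis $\mathbf U_{m+1}^{(q-1)}$, then yields $g_{m+1} \geq g_m$.

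Chaining these one-step inequalities over $m, m+1, \ldots, n-1$ gives $g_m \leq g_{m+1} \leq \cdots \leq g_n$, which is the claim. I expect the only genuine obstacle to be the careful bookkeeping in the key step: verifying that the context tensor obtained by unfolding the post-mode-$m$ configuration along mode $m+1$ is literally $\mathbf A_{m+1}^{(q)}$, so that the lower bound appearing in Lemma~\ref{remark1} for mode $m+1$ matches $g_m$ exactly. Once this identification is secured, the argument is purely a matter of assembling the single-mode guarantee with the unfolding invariance of the L1-norm.
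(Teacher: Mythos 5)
Your proof is correct and follows essentially the same route as the paper's: both rest on the unfolding invariance of the L1 core norm to identify $g_m = \|{\mathbf U_{m+1}^{(q-1)}}^\top \mathbf A_{m+1}^{(q)}\|_1$, invoke Lemma~\ref{remark1} at the single updated mode, and chain the one-step inequalities by induction. The only difference is cosmetic — you ascend from $m$ to $n$ while the paper descends from $n$ to $n-1$ — and your bookkeeping of the context matrix is exactly the identification the paper makes.
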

\begin{proof}
It holds that
	\begin{align}
	\left\|{\mathbf U_n^{(q)}}^\top \mathbf A_n^{(q)}\right\|_1	&=  \left\| {\mathbf U_n^{(q)}}^\top \left[\tX \times_{k<n} {\mathbf U_k^{(q)}}^\top \times_{l>n} {\mathbf U_l^{(q-1)}}^\top \right]_{(n)} \right\|_1
	\\& \overset{\text{Lemma } \ref{remark1}}{\geq}\left\| {\mathbf U_n^{(q-1)}}^\top \left[\tX \times_{k<n} {\mathbf U_k^{(q)}}^\top \times_{l>n} {\mathbf U_l^{(q-1)}}^\top \right]_{(n)} \right\|_1
	\\&= \left\| {\mathbf U_{n-1}^{(q)}}^\top \left[\tX \times_{k<n-1} {\mathbf U_k^{(q)}}^\top \times_n {\mathbf U_n^{(q-1)}}^\top \times_{l>n} {\mathbf U_l^{(q-1)}}^\top \right]_{(n-1)} \right\|_1
	\\&=\left\| {\mathbf U_{n-1}^{(q)}}^\top \mathbf A_{n-1}^{(q)}\right\|_1.
	\end{align}
	By induction, for every $n>m$, $\left\|{\mathbf U_n^{(q)}}^\top \mathbf A_n^{(q)}\right\|_1 \geq 	\left\| {\mathbf U_m^{(q)}}^\top \mathbf A_m^{(q)}\right\|_1$.
\end{proof}

The following new Lemma \ref{lemma:hooi2} and Proposition \ref{hooiprop} conclude our analysis on the monotonic increase of the L1-Tucker metric across the iterations of L1-HOOI.

\begin{lemma}
	For every $q>0$, it holds that 
	\begin{align}
    	\left\|{\mathbf U_1^{(q)}}^\top \mathbf A_1^{(q)}\right\|_1 \geq 	\left\| {\mathbf U_N^{(q-1)}}^\top \mathbf A_N^{(q-1)}\right\|_1.
	\end{align}
		\label{lemma:hooi2}
\end{lemma}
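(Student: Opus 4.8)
The plan is to reduce this inter-iteration inequality to a single invocation of Lemma \ref{remark1}, by observing that \emph{no} basis is refreshed between the final (mode-$N$) update of iteration $q-1$ and the first (mode-$1$) update of iteration $q$. The essential tool is the mode-invariance identity recorded earlier in the excerpt: for any bases $\{\mathbf U_n\}_{n\in[N]}$ and any mode $m$,
\begin{align}
\left\| \tX \textstyle \times_{n \in [N]} \mathbf U_{n}^\top \right\|_1 = \left\| \mathbf U_m^\top \left[\tX \textstyle \times_{j<m} \mathbf U_j^\top \times_{k>m} \mathbf U_k^\top\right]_{(m)} \right\|_1 .
\label{planinvariance}
\end{align}

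First I would rewrite the right-hand side of the claim as a full L1-Tucker objective value. Because the product $\times_{k>N}$ is empty, $\mathbf A_N^{(q-1)} = [\tX \times_{m<N} {\mathbf U_m^{(q-1)}}^\top]_{(N)}$, so pairing it with ${\mathbf U_N^{(q-1)}}$ and applying \eqref{planinvariance} with $m=N$ and $\mathbf U_n = \mathbf U_n^{(q-1)}$ gives
\begin{align}
\left\| {\mathbf U_N^{(q-1)}}^\top \mathbf A_N^{(q-1)} \right\|_1 = \left\| \tX \textstyle \times_{n\in[N]} {\mathbf U_n^{(q-1)}}^\top \right\|_1 .
\end{align}
Next I would perform the symmetric computation at the start of iteration $q$. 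Since $\times_{m<1}$ is empty, $\mathbf A_1^{(q)} = [\tX \times_{k>1} {\mathbf U_k^{(q-1)}}^\top]_{(1)}$, and the only other-mode bases entering it are the iteration-$(q-1)$ bases $\{\mathbf U_k^{(q-1)}\}_{k>1}$. Pairing with the \emph{old} mode-$1$ basis ${\mathbf U_1^{(q-1)}}$ and using \eqref{planinvariance} with $m=1$ therefore yields the identical quantity,
\begin{align}
\left\| {\mathbf U_1^{(q-1)}}^\top \mathbf A_1^{(q)} \right\|_1 = \left\| \tX \textstyle \times_{n\in[N]} {\mathbf U_n^{(q-1)}}^\top \right\|_1 = \left\| {\mathbf U_N^{(q-1)}}^\top \mathbf A_N^{(q-1)} \right\|_1 .
\end{align}

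Finally I would invoke Lemma \ref{remark1} for the mode-$1$ update of iteration $q$, which guarantees $\|{\mathbf U_1^{(q)}}^\top \mathbf A_1^{(q)}\|_1 \geq \|{\mathbf U_1^{(q-1)}}^\top \mathbf A_1^{(q)}\|_1$; chaining this with the previous display delivers the stated inequality, with no induction required. The one point that must be handled carefully — and the only place the argument could go wrong — is the bookkeeping of which bases populate $\mathbf A_1^{(q)}$ versus $\mathbf A_N^{(q-1)}$: both are assembled purely from iteration-$(q-1)$ bases, because at the instant mode $1$ is visited in iteration $q$ none of the bases has yet been updated since the end of iteration $q-1$. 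Consequently the two objective values literally coincide rather than merely being comparable, which is exactly what lets Lemma \ref{remark1} bridge consecutive iterations.
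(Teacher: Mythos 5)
Your proposal is correct and follows essentially the same route as the paper's own proof: both arguments hinge on observing that $\mathbf A_1^{(q)}$ and $\mathbf A_N^{(q-1)}$ are assembled entirely from the iteration-$(q-1)$ bases, so that the two objective values coincide via the mode-invariance of the core's L1 norm, after which a single application of Lemma \ref{remark1} to the mode-$1$ update closes the inequality. The only difference is presentational --- the paper chains the equalities starting from the left-hand side, whereas you equate both sides to the common value $\left\| \tX \times_{n\in[N]} {\mathbf U_n^{(q-1)}}^\top \right\|_1$ first --- which is immaterial.
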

\begin{proof}
It holds that
	\begin{align}
		\left\|{\mathbf U_1^{(q)}}^\top \mathbf A_1^{(q)}\right\|_1	&= \left\| {\mathbf U_1^{(q)}}^\top \left[\tX \times_{k>1} {\mathbf U_k^{(q-1)}}^\top \right]_{(1)} \right\|_1
		\\& \overset{\text{Lemma } \ref{remark1}}{\geq}\left\| {\mathbf U_1^{(q-1)}}^\top \left[\tX \times_{k>1} {\mathbf U_k^{(q-1)}}^\top \right]_{(1)} \right\|_1
		\\&= \left\| {\mathbf U_N^{(q-1)}}^\top \left[\tX \times_{k<N} {\mathbf U_k^{(q-1)}}^\top \right]_{(N)} \right\|_1
		\\&=\left\| {\mathbf U_N^{(q-1)}}^\top \mathbf A_N^{(q-1)}\right\|_1.
	\end{align}
\end{proof}
In view of Lemmas \ref{remark1}, \ref{lemma:hooi1}, and \ref{lemma:hooi2}, the following Proposition \ref{hooiprop} holds true. 

\begin{proposition}
	For any $n  \in [N]$ and every $q' > q$ 
	\begin{align}
	\left\|{\mathbf U_{n}^{(q')}}^\top\mathbf A_{n}^{(q')}\right\|_1 \geq \left\|{\mathbf U_n^{(q)}}^\top\mathbf A_n^{(q)}\right\|_1. 
	\end{align}
	\label{hooiprop}
\end{proposition}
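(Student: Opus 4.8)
The plan is to recognize that Lemmas \ref{remark1}, \ref{lemma:hooi1}, and \ref{lemma:hooi2} together arrange all the intermediate metric values into a single monotonically non-decreasing chain, indexed lexicographically by the pair $(q,n)$ of iteration and mode. Once this chain is in place, the claim for a fixed $n$ and $q'>q$ amounts to reading off two of its entries in the correct order, so the whole argument is a matter of traversing the chain rather than any new estimate.

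First I would introduce the shorthand $v(q,n) := \|{\mathbf U_n^{(q)}}^\top \mathbf A_n^{(q)}\|_1$ for the metric value attained right after the mode-$n$ basis is updated during iteration $q$. In this notation, Lemma \ref{lemma:hooi1} asserts that within a single iteration the values are non-decreasing in the mode index, i.e. $v(q,1)\le v(q,2)\le\cdots\le v(q,N)$, while Lemma \ref{lemma:hooi2} supplies the link across the iteration boundary, $v(q,N)\le v(q+1,1)$, so that the last value of iteration $q$ never exceeds the first value of iteration $q+1$. (Both of these are themselves consequences of the per-update increase guaranteed by Lemma \ref{remark1}.)

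Next I would chain these two facts to compare the same mode in two consecutive iterations. For any fixed $n\in[N]$,
\begin{align}
v(q,n) \le v(q,N) \le v(q+1,1) \le v(q+1,n),
\end{align}
where the two outer inequalities follow from Lemma \ref{lemma:hooi1} (applied inside iterations $q$ and $q+1$, using $N\ge n$ and $n\ge 1$) and the middle inequality from Lemma \ref{lemma:hooi2}. This yields $v(q,n)\le v(q+1,n)$ for every $n$.

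Finally, a one-line induction on the iteration gap extends the bound to arbitrary $q'>q$: assuming $v(q,n)\le v(q'-1,n)$ and applying the consecutive-iteration inequality once more gives $v(q,n)\le v(q',n)$, which is precisely the statement of the proposition. I do not anticipate any genuine obstacle here, since all the analytic content is already carried by the three preceding lemmas; the only point requiring care is to route the comparison through mode $N$ of iteration $q$ and mode $1$ of iteration $q+1$ via Lemma \ref{lemma:hooi2}, rather than attempting to bound $v(q,n)$ against $v(q+1,n)$ directly, a step the lemmas do not justify on their own.
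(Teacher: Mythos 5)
Your proof is correct and follows essentially the same route as the paper's: both arguments traverse the chain $v(q,n) \le v(q,N) \le v(q+1,1) \le v(q+1,n)$ using Lemma \ref{lemma:hooi1} within an iteration and Lemma \ref{lemma:hooi2} across the iteration boundary. The only difference is presentational—the paper writes one descending chain of inequalities from $(q',n)$ to $(q,n)$ with the repeated applications left implicit, whereas you make that repetition explicit via induction on the iteration gap, which is a slightly tidier write-up of the identical argument.
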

\begin{proof}
It holds that
\begin{align}
\left\|{\mathbf U_{n}^{(q')}}^\top\mathbf A_{n}^{(q')}\right\|_1 
& \overset{\text{Lemma } \ref{lemma:hooi1}} {\geq}
\left\|{\mathbf U_{1}^{(q')}}^\top\mathbf A_{1}^{(q')}\right\|_1  \\
& \overset{\text{Lemma } \ref{lemma:hooi2}} {\geq}
\left\|{\mathbf U_{N}^{(q'-1)}}^\top\mathbf A_{N}^{(q'-1)}\right\|_1  \\
& \overset{\text{Lemma } \ref{lemma:hooi1}} {\geq}
\left\|{\mathbf U_{1}^{(q'-1)}}^\top\mathbf A_{1}^{(q'-1)}\right\|_1 \\
& \overset{~~~~~~~~~~~} {\geq}
\left\|{\mathbf U_{N}^{(q)}}^\top\mathbf A_{N}^{(q)}\right\|_1  \\
& \overset{~~~~~~~~~~~} {\geq}
\left\|{\mathbf U_{n}^{(q)}}^\top\mathbf A_{n}^{(q)}\right\|_1.
\end{align}
\end{proof}

Denoting $p:=\prod_{n \in [N]} d_n$, the following Lemma \ref{lemineq} provides an upper bound for the L1-Tucker metric. 

\begin{lemma}
For any $\{\mathbf U_n \in \mathbb S(D_n, d_n) \}_{n\in [N]}$, it holds that 
\begin{align}
\| \tX \times_{n \in [N]} \mathbf U_n^\top \|_1 \leq \sqrt{p} \| \tX \|_F.
\end{align}
\label{lemineq} 
\end{lemma}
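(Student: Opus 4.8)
The plan is to split the bound into two independent links by routing through the Frobenius norm of the projected core. First I would set $\tG \eqdef \tX \times_{n \in [N]} \mathbf U_n^\top$, which lives in $\mathbb R^{d_1 \times \ldots \times d_N}$ and therefore has exactly $p = \prod_{n \in [N]} d_n$ scalar entries. The target then reads $\| \tG \|_1 \leq \sqrt{p}\, \| \tX \|_F$, and I would obtain it by chaining $\| \tG \|_1 \leq \sqrt{p}\, \| \tG \|_F \leq \sqrt{p}\, \| \tX \|_F$.

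The first link is a purely finite-dimensional norm comparison. Collecting the $p$ entries of $\tG$ into a vector $\mathbf g \in \mathbb R^p$, the Cauchy--Schwarz inequality gives $\| \mathbf g \|_1 = \langle \mathbf 1_p, |\mathbf g| \rangle \leq \| \mathbf 1_p \|_2 \| \mathbf g \|_2 = \sqrt{p}\, \| \mathbf g \|_2$, which is exactly $\| \tG \|_1 \leq \sqrt{p}\, \| \tG \|_F$, since both tensor norms are computed entrywise and are invariant under the flattening that produces $\mathbf g$.

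The second link asserts that projecting onto orthonormal bases along every mode cannot increase the Frobenius norm, and I would argue it one mode at a time. For a fixed mode $n$, the product $\tX \times_n \mathbf U_n^\top$ unfolds as $\mathbf U_n^\top [\tX]_{(n)}$, and because $\mathbf U_n \in \mathbb S(D_n, d_n)$ satisfies $\mathbf U_n^\top \mathbf U_n = \mathbf I_{d_n}$, the matrix $\mathbf U_n^\top$ has orthonormal rows and hence spectral norm $\| \mathbf U_n^\top \|_2 = 1$. Submultiplicativity, $\| \mathbf U_n^\top [\tX]_{(n)} \|_F \leq \| \mathbf U_n^\top \|_2 \| [\tX]_{(n)} \|_F$, combined with the fact that unfolding merely permutes entries and thus preserves $\| \cdot \|_F$, yields $\| \tX \times_n \mathbf U_n^\top \|_F \leq \| \tX \|_F$. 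Applying this successively for $n = 1, \ldots, N$ gives $\| \tG \|_F \leq \| \tX \|_F$, and combining the two links completes the argument.

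The routine parts — the Cauchy--Schwarz step and the $\sqrt{p}$ bookkeeping — are immediate; the only point demanding care is the second link, where one must verify that each mode-$n$ product genuinely corresponds to left-multiplication of the unfolding by a spectral-norm-one matrix, and that $\| \cdot \|_F$ is invariant across the different mode unfoldings. That single-mode non-expansiveness claim, iterated across all $N$ modes, is the crux of the bound.
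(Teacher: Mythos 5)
Your proof is correct, and it takes a genuinely different route from the paper's for the key step. Both arguments share the same skeleton: bound the entrywise L1 norm of the core by $\sqrt{p}$ times its Frobenius norm via Cauchy--Schwarz, then show that projection onto orthonormal bases along all modes is non-expansive in the Frobenius norm. The difference lies in how that second step is executed. The paper vectorizes everything at once: it writes $\mathbf y = \text{vec}\left(\left[\tX \times_{n \in [N]} \mathbf U_n^\top\right]_{(1)}\right) = \mathbf Z^\top \mathbf x$ with $\mathbf Z = \mathbf U_N \otimes \mathbf U_{N-1} \otimes \cdots \otimes \mathbf U_1$, uses the fact that a Kronecker product of orthonormal matrices is itself orthonormal (so $\mathbf Z \in \mathbb S(P,p)$), and obtains $\| \mathbf Z^\top \mathbf x \|_2 \leq \| \mathbf x \|_2$ in a single inequality. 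You instead iterate mode by mode, relying only on the unfolding identity $\left[\tX \times_n \mathbf U_n^\top\right]_{(n)} = \mathbf U_n^\top [\tX]_{(n)}$, the bound $\| \mathbf B \mathbf A \|_F \leq \| \mathbf B \|_2 \| \mathbf A \|_F$, and $\| \mathbf U_n^\top \|_2 = 1$. Your route is more elementary --- it avoids the Kronecker/vectorization machinery entirely --- and it isolates the reusable intermediate fact $\left\| \tX \times_{n \in [N]} \mathbf U_n^\top \right\|_F \leq \| \tX \|_F$, which is of independent interest; the paper's route is more compact, collapsing all $N$ projections into one orthonormal matrix. The one point needing care in your argument --- that each mode-$n$ product really is left multiplication of the mode-$n$ unfolding by a matrix of unit spectral norm, and that the Frobenius norm is invariant across unfoldings --- is exactly the point you flagged, and both claims hold, so the induction over modes goes through.
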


\begin{proof}
Let $\mathbfcal Y = \tX \times_{n \in [N]} \mathbf U_n^\top \in \mathbb R^{d_1 \times \ldots \times d_N}$ and define $\mathbf y = \text{vec}([\mathbfcal Y]_{(1)})$ and $\mathbf x= \text{vec}([\mathbfcal X]_{(1)})$. It holds that $\| \mathbfcal Y \|_1 = \| \mathbf y\|_1$ and  $\| \tX \|_F = \| \mathbf x\|_2$. Also, define $\mathbf Z = \mathbf U_N \otimes  \mathbf U_{N-1} \otimes \ldots \otimes \mathbf U_{1} \in \mathbb S(P, p)$. We observe that 
\begin{align}
\mathbf y 
&= \text{vec} ( \mathbf U_1^\top [\tX]_{(1)} (\mathbf U_N \otimes \mathbf U_{N-1} \otimes \ldots \otimes \mathbf U_{2}  )) =   \mathbf Z^\top \mathbf x.
\end{align}
Accordingly, 
\begin{align}
\| \mathbfcal Y\|_1  
 = \| \mathbf  y \|_1 
  \leq \sqrt{p} \|  \mathbf  y\|_2 
  =  \sqrt{p} \|  \mathbf  Z^\top \mathbf x \|_2  
  \leq 
\sqrt{p} \|  \mathbf  x \|_2   = \sqrt{p} \|  \tX \|_F.
\end{align}
\end{proof}

Lemma \ref{lemineq} shows that the L1-Tucker metric is upper bounded by $\sqrt{p} \|  \tX \|_F$.  This, in conjunction with  Proposition \ref{hooiprop}, imply that  as $q$ increases the L1-HOOI iterations converge in the L1-Tucker metric.

\begin{figure}[t!]
	\centering
	\includegraphics[width=0.7\textwidth]{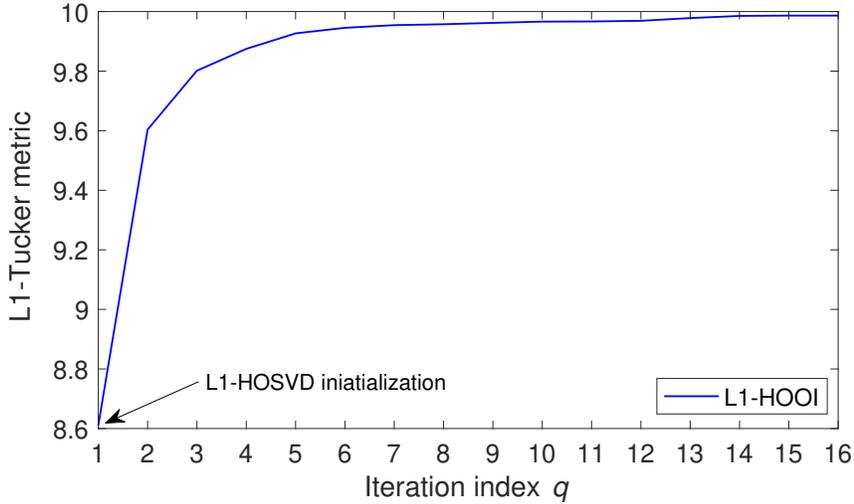} 	
	\caption{L1-Tucker metric across L1-HOOI iterations. }
	\label{synth0}
\end{figure}

To visualize the convergence, we carry out the following study. We form $5$-way tensor $\tX \in \mathbb R^{10 \times 10 \times \ldots \times 10}$, drawing independent entries from $\mathcal{N}(0,1)$. Then, we apply  L1-HOOI  on $\tX$, initialized to L1-HOSVD.   In Fig. \ref{synth0},  we plot the evolution of L1-Tucker metric $\| \tX \times_{n \in [N]} {\mathbf U_n^{(q)}}^\top  \|_1$,  versus the L1-HOOI iteration index $q$. In accordance to our formal analysis, we observe the monotonic increase of the metric and convergence after $16$ iterations.

In practice, one can terminate the L1-HOOI iterations  when the metric-increase ratio 
\begin{align}
\zeta(q) = \frac{ \left\| \tX \textstyle  \times_{n \in [N]} {\mathbf U_{n}^{(q)}}^\top \right\|_1 - \left\| \tX \textstyle  \times_{n \in [N]} {\mathbf U_{n}^{(q-1)}}^\top \right\|_1}{\left\| \tX \textstyle  \times_{n \in [N]}  {\mathbf U_{n}^{(q-1)}}^\top \right\|_1}
\end{align}
drops below a predetermined threshold $\tau>0$, or when $q$ exceeds a maximum number of permitted iterations.
Next, we discuss the computational cost of L1-HOOI.

\begin{table}[]
	\centering
	\resizebox{\columnwidth}{!}{%
		{\footnotesize
			\begin{tabular}{p{2.5 cm}p{10cm}}
				\toprule
				Method  &  Cost \\ \midrule
				\rowcolor{Gray}	{PCA (SVD)}      & $\mathcal{O}\left(D_1D_2\min\{D_1,D_2\}\right)$  \\ 
				\rowcolor{LightCyan}{L1-PCA (AO)}        &   $\mathcal{O}\left(D_1^2D_2 d_1 \right)$   \\ 
				\rowcolor{Gray}\multirow{1}{*}{{HOSVD} }	   &   $\mathcal{O}\left(\max\limits_{k \in [N]} \left\{D_k P_k  \min\{D_k, P_k\}\right\}\right)$   \\  
				\rowcolor{LightCyan}\multirow{1}{*}{{L1-HOSVD}}	   &   $\mathcal{O}\left(\max\limits_{k \in [N]} \{D_k^2 P_kd_k\}\right)$    \\  
				\rowcolor{Gray}	\multirow{1}{*}{{HOOI }}	   &  $\mathcal{O} \left(T \max\limits_{n \in [N]}\min\limits_{k \in [N]\setminus n}\left\{D_k d_k P_k {+}  D_n p_n \min\{D_n, p_n\}\right\} \right)$  \\  
				\rowcolor{LightCyan}\multirow{1}{*}{{L1-HOOI}}	    &  $\mathcal{O} \left(T \max\limits_{n \in [N]}\min\limits_{k \in [N]\setminus n}\left\{D_kd_k P_k {+} D_n^2 p_n d_n\right\} \right)$    \\  
				\bottomrule
			\end{tabular}
		}
	}
	\caption{Computational costs of PCA  \cite{GOLUB}, L1-PCA (alternating optimization with arbitrary initialization)  \cite{arlington}, HOSVD  \cite{Lathauwer}, L1-HOSVD (proposed), HOOI \cite{Kolda}, and  L1-HOOI (proposed).   PCA/L1-PCA costs are reported for input matrix $\mathbf X \in \mathbb R^{D_1 \times D_2}$ and decomposition rank $d_1$.  Tucker/L1-Tucker costs are reported for input tensor $\tX \in \mathbb R^{D_1 \times D_2 \times \ldots \times D_N}$ and mode-$n\in [N]$ ranks $\{d_n\}_{n \in [N]}$, for $P_k=\prod_{n \in [N]\setminus k}D_n$ and $p_k =\prod_{n \in [N]\setminus k}d_n.$ $T$ is the maximum number of  iterations conducted by HOOI and L1-HOOI.}
	\label{complexities}
\end{table}

\subsubsection{Complexity Analysis}

As studied above, initialization of L1-HOOI by means of L1-HOSVD costs  $\mathcal O(\max_{k \in [N]} D_{k} d_k  P )$, where $P=\prod_{m \in [N]} D_m$.
Then, at iteration $q$, L1-HOOI computes matrix $\mathbf A_{n}^{(q)}$ in \eqref{l1hooi2} and  its L1-PCA,  for every $n$.  Matrix $\mathbf A_{n}^{(q)}$ can computed by a sequence of matrix-to-matrix products as follows. First, we compute the mode-$k$ product of $\tX$ with $\mathbf U_{k}^{(z_k)}$, for some $k \neq n$ ($z_k=q$ if $k<n$ and $z_k=q-1$ if $k>n$), with cost $\mathcal O(d_k P)$. Next, we compute the $l$-mode product of $\tX \times_k \mathbf U_{k}^{(z_k)}$ with $\mathbf U_l^{(z_l)}$, for some $l \notin \{n,k\}$,  with cost $\mathcal O(d_l d_k P_k)$, where $P_k=\prod_{m \in [N]\setminus k} D_m$. We observe that the second  product has lower cost than the first, for any selection of $k$ and $l$. Similarly, each subsequent mode product will have further  reduced cost. Thus, keeping the dominant term (cost of first product) and taking products in a computationally favorable order, the computation of $\mathbf A_{n}^{(q)}$ costs $\mathcal O(\min_{k \in [N]\setminus n}d_k P)$. Importantly, the cost of computing $\mathbf A_{n}^{(q)}$ is the same for every iteration index $q$. After $\mathbf A_{n}^{(q)}$  is computed, \eqref{l1hooi2} is solved with cost $\mathcal O(D_n^2 p)$, as shown above, where $p=\prod_{m \in [N]} d_m$. Thus, for fixed $(q,n)$, computation and L1-PCA of $\mathbf A_{n}^{(q)}$ cost $\min_{k \in [N]\setminus n}\{d_k P {+} D_n^2 p\}$. Then, we observe that, there exists mode index $n \in [N]$ such that, the cost of computing $\mathbf A_{n}^{(q)}$ and its L1-PCA basis dominates over the respective cost of the $m$-mode for every $m \in [N]\setminus n$. The latter implies that the dominant cost at iteration $q$ of L1-HOOI is $\max_{n \in [N]}\min_{k \in [N]\setminus n}\{d_k P + D_n^2 p \}$. Denoting by  $T$   the maximum number of iterations permitted by L1-HOOI,  the overall computational cost of L1-HOOI  is $\mathcal{O}(T (\max_{n \in [N]}\min_{k \in [N]\setminus n}\{D_k d_k P_k +D_n^2 p\})$. In Table \ref{complexities}, we offer  the computational costs of PCA, L1-PCA, HOSVD, L1-HOSVD, HOOI, and L1-HOOI.  We observe that the costs of the proposed L1-HOSVD and L1-HOOI algorithms are comparable with those of standard HOSVD and HOOI respectively.
Next, we conduct numerical studies and compare the performance of standard Tucker solvers with that of the proposed L1-Tucker algorithms.

\section{Numerical Studies}
 
\subsection{Tensor Reconstruction}
 \label{recstud}
 
We set $N=5$,  $D_1=D_3=D_5=10$, $D_2=D_4=15$, $d_1=d_2=6$, $ d_3=d_4=d_5=4$, and generate Tucker-structured $\tX=\tG \times_{n \in [5]} \mathbf U_n$. The core tensor $\tG$ draws entries from $\mathcal N(0,9)$ and, for every $n$, $\mathbf{U}_n$  is an arbitrary basis. We corrupt all entries of $\tX$ with zero-mean unit-variance additive white Gaussian noise (AWGN), disrupting its Tucker structure. Moreover, we corrupt $N_o$ out of the $P=\prod_{i=1}^5 D_i = 225,000$ entries of $\tX$ --corruption ratio $\rho = \frac{N_o}{P}$-- by adding high magnitude outliers drawn from $\mathcal{N}(0,\sigma_o^2)$.  Thus, we form $\tX^{\text{corr}}=\tX+\mathbfcal N +\tO$, where $\mathbfcal N$ and $\tO$ model AWGN and sparse outliers, respectively.
Our objective is to reconstruct $\tX$ from the available $\tX^{\text{corr}}$. Towards our objective, we Tucker decompose $\tX^{\text{corr}}$ by means of HOSVD, HOOI,    L1-HOSVD, and L1-HOOI and obtain bases $\{\hat{\mathbf U_n}\}_{n\in [5] }$.  Then, we reconstruct $\tX$ as $\hat{\tX}=\tX^{\text{corr}}\times_{n \in [5]}\hat{\mathbf U_n}\hat{\mathbf U_n^\top}$. The   normalized squared error  (NSE) is defined as
$
{\| \tX - \hat{\tX} \|_F^2}{ \| \tX \|_F^{2}}^{-1}.
$
In Figure \ref{synth2fig}, we set $N_o=300$ ($\rho = 1.33~10^{-3}$) and plot the mean NSE (MNSE), evaluated over 1000 independent noise/outlier realizations, versus outlier standard deviation $\sigma_o=4,8,\ldots,28$. In the absence of outliers ($\sigma_o=0$), all methods under comparison exhibit similarly low  MNSE. As the outlier standard deviation $\sigma_o$ increases the MNSE of all methods increases. We notice that the performances of HOSVD and HOOI  markedly deteriorate for $\sigma_o\geq 12$ and $\sigma_o\geq 20$ respectively.
On the other hand, L1-HOSVD and L1-HOOI remain robust against corruption, across the board.

 \begin{figure}[t!]
	\centering
	\begin{subfigure}{0.495\linewidth}
	\includegraphics[width=1\linewidth]{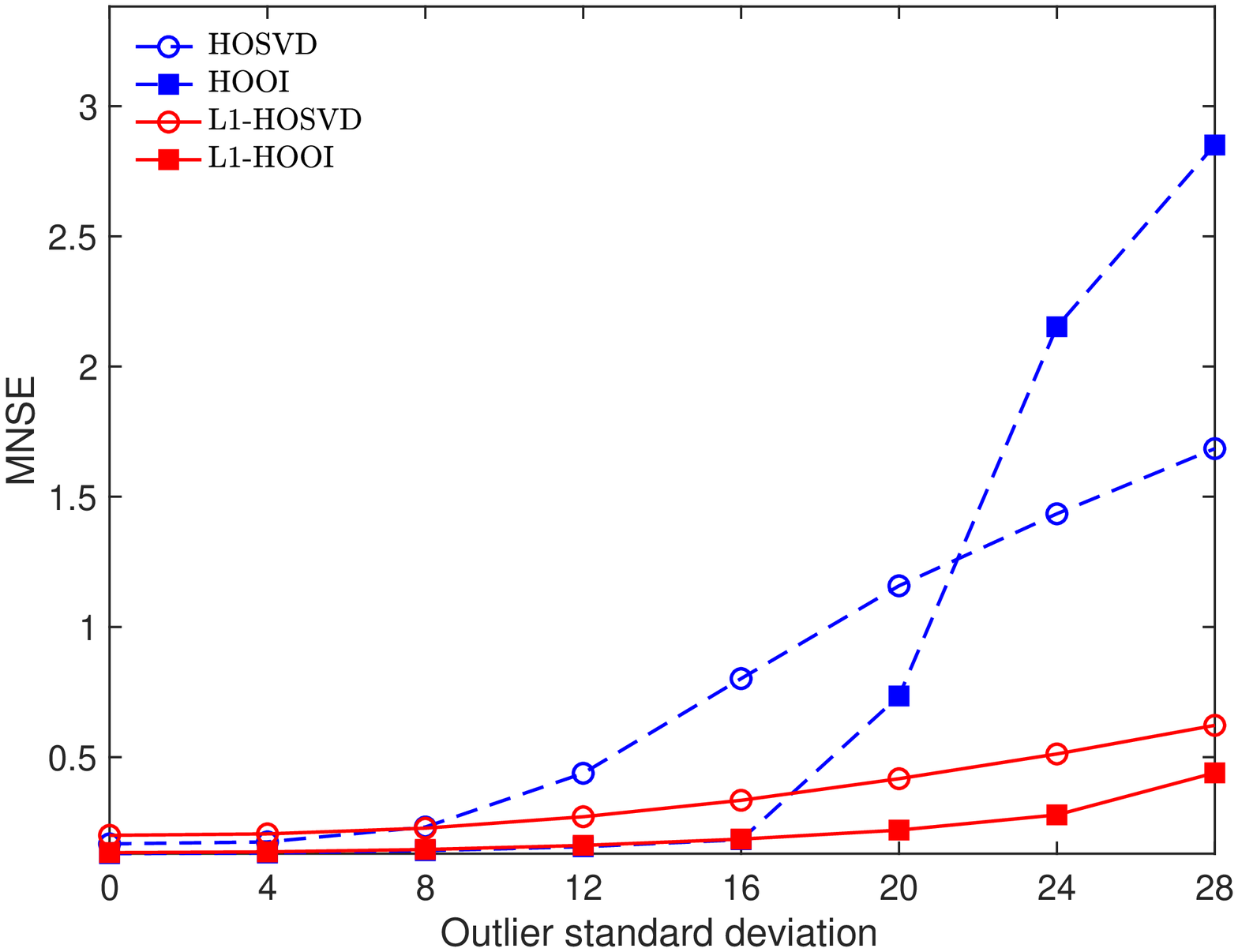} 
	\caption{}
	\label{synth2fig}
	\end{subfigure}~
    \begin{subfigure}{0.495\linewidth}	
	\includegraphics[width=1\linewidth]{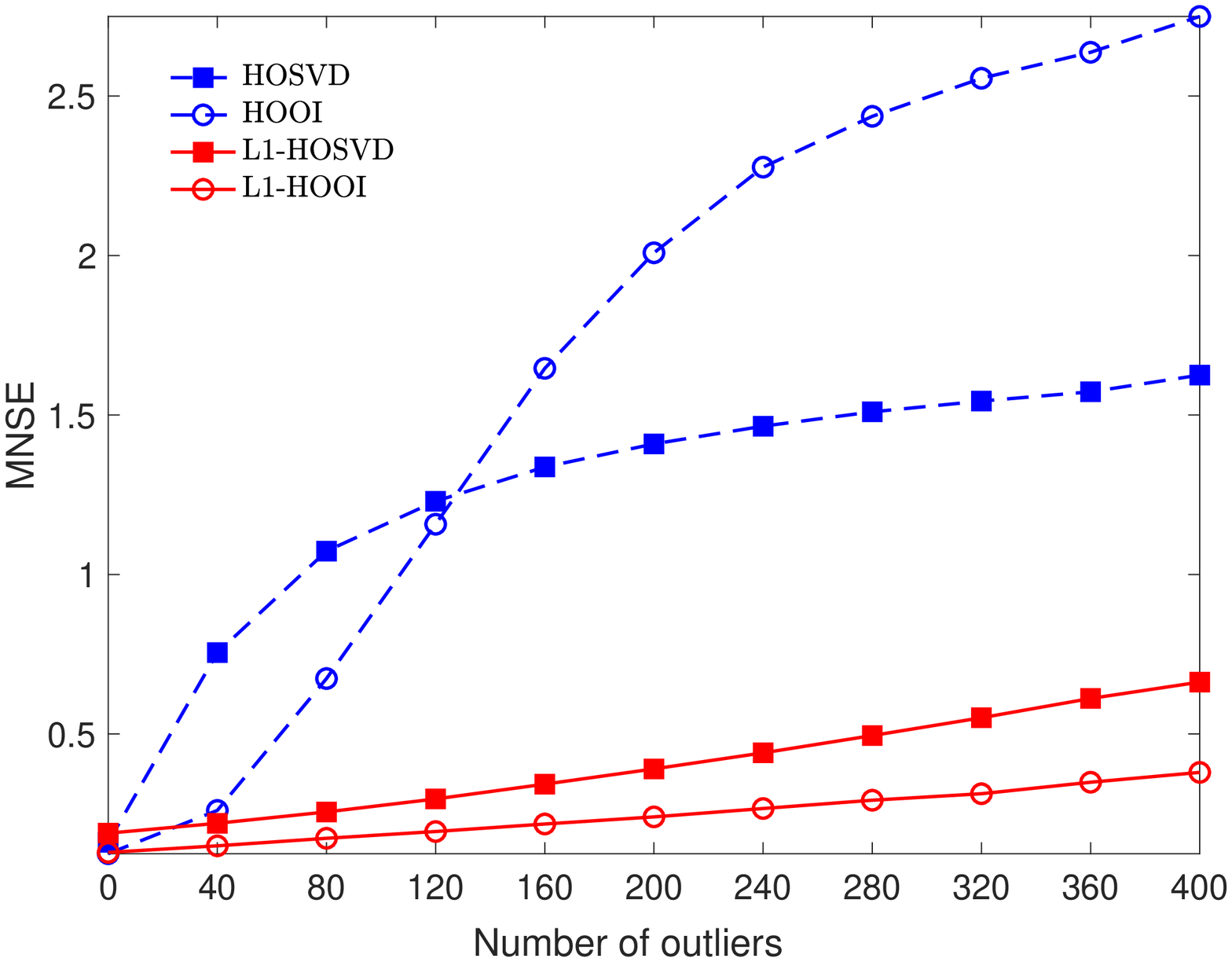}
	\caption{}
	\label{synth1fig} 
	\end{subfigure}
~
 \begin{subfigure}{0.7\textwidth}
\includegraphics[width=1\textwidth]{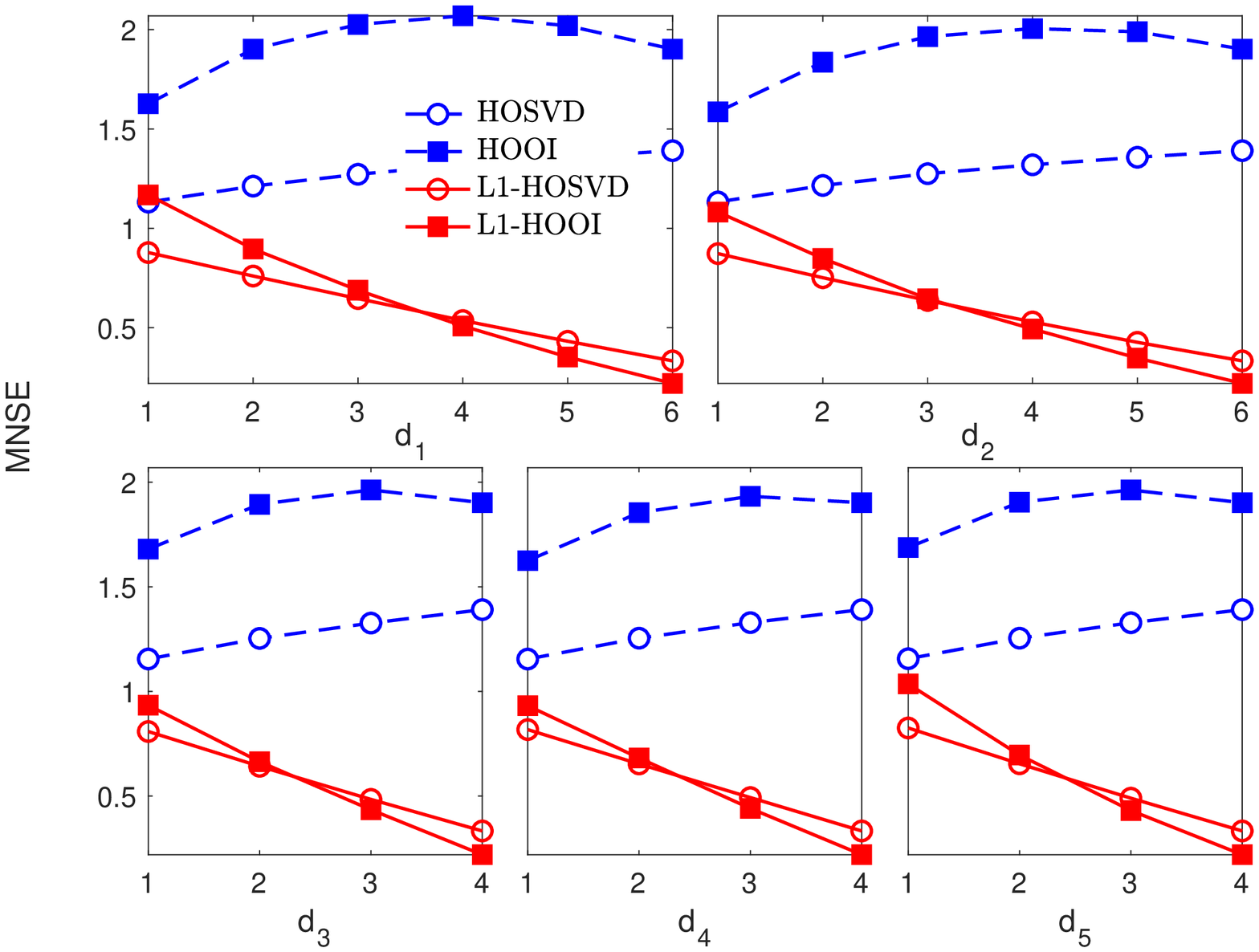} 
\caption{}	
\label{synth3fig} 
\end{subfigure}%
	\caption{(a) MNSE versus  standard deviation $\sigma_o$ for $N_o=300$. (b) MNSE versus number of outliers $N_o$ for  $\sigma_o=26$. (c) MNSE versus ${d_n}_{n\in [N=5]}$ for $N_o=150$  and $\sigma_o=28$; for every $m$,  $d_m$ is set to its nominal value and $d_n$ variance,  $n \in [5]\setminus m$.}
	\label{synth2}
\end{figure}

In Figure \ref{synth1fig}, we set $\sigma_o=26$ and plot the MNSE versus number of outliers $N_o=0,40,\ldots,400$. Expectedly,  in  the absence of outliers ($N_o=0$),  all methods exhibit low MNSE. As the number of outliers increases, HOSVD and HOOI  start exhibiting high reconstruction error, while L1-HOSVD and L1-HOOI remain robust. For instance,  the MNSE of L1-HOSVD for $N_o=400$  outliers is lower than the MNSE of standard HOSVD for $N_o=40$ (ten times fewer) outliers.

Finally, in Figure \ref{synth3fig}, we set $\sigma_o=28$, $N_o=150$ ($\approx0.07 \%$ of total data entries are corrupted) and plot the MNSE versus $d_n \forall n$ while $d_m$ is set to its nominal value for every $m \in [N=5]\setminus n$. We observe that, even for a very small fraction of outlier corrupted entries in $\tX^{\text{corr}}$, standard Tucker methods are clearly misled across all $5$ modes. On the other hand, the proposed L1-Tucker counterparts, exhibit sturdy outlier resistance and reconstruct $\tX$ well, remaining almost unaffected by the outlying entries in $\tX^{\text{corr}}$.

A robust tensor analysis algorithm, specifically designed for counteracting sparse outliers, is the High-Order Robust PCA (HORPCA) \cite{goldfarb}. Formally, given $\tX^{\text{corr}}$, HORPCA solves 
\begin{align}
\begin{matrix}
\underset{\hat{\tX}, ~\hat{\mathbfcal O} \in \mathbb R^{D_1 \times \ldots \times D_N}}{\text{minimize}}~ &\sum_{n=1}^{N} \| [\hat{\tX}]_{(n)} \|_* + \lambda \|  \hat{\mathbfcal O} \|_1 \\
\text{subject to}~& \hat{\tX} +  \hat{\mathbfcal O} = \tX^{\text{corr}}
\end{matrix}.
\label{horpca}
\end{align} 
Authors in \cite{goldfarb} presented the HoRPCA-S algorithm for the solution of \eqref{horpca} which relies on a specific sparsity penalty parameter $\lambda$, as well as a thresholding variable $\mu$. The model in \eqref{horpca}  was introduced considering that, apart from the sparse outliers, there is no dense (full rank) corruption to $\tX$ (see \cite{goldfarb}, Section 2.6). In the case of additional dense corruption, HORPCA is typically accompanied by HOSVD \cite{goldfarb, li2015low,lu2016tensor}. In the sequel, we refer to this approach as HORPCA+HOSVD.

\begin{figure}[t!]	
	\centering
	\includegraphics[width=0.6\textwidth]{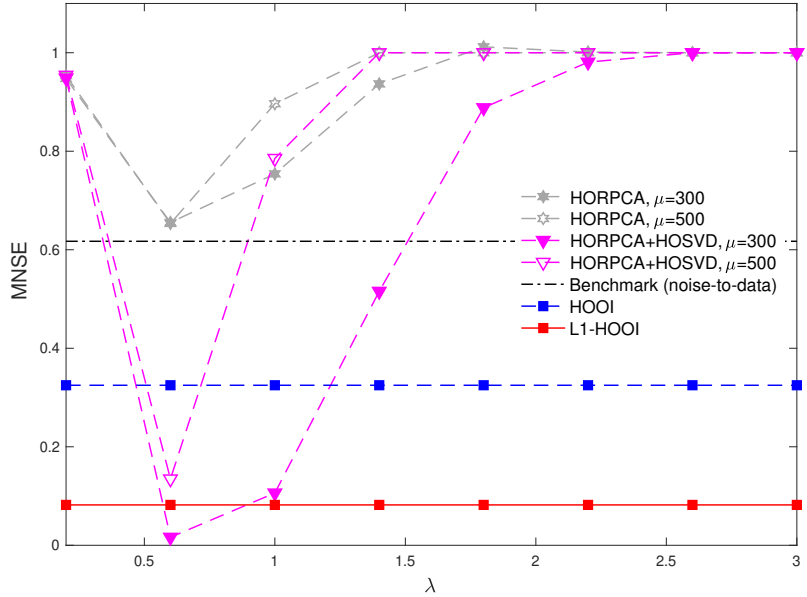}	
	\caption{MNSE versus $\lambda$ for varying  $\mu$.}
	\label{mesh}
\end{figure}

In our next  study, we set $N=5$, $D_n=5$, and $d_n=2$ for every $n$, and build the Tucker-structured  data tensor $\tX=\tG \times_{n \in [5]} \mathbf U_n$, where the entries of core $\tG$ are independently drawn from $\mathcal N(0,12^2)$. Then, we add both dense AWGN and sparse outliers, creating $\tX^{\text{corr}}=\tX + \mathbfcal N + \mathbfcal O$, where the entries of noise $\mathbfcal N$ are drawn independently from $\mathcal N(0, 1)$ and the  $15$ non-zero  entries of  $\mathbfcal O$ (in arbitrary locations) are drawn from $\mathcal N(0,20^2)$.  Then, we  attempt to reconstruct $\tX$ from the available $\tX^{\text{corr}}$ using  HOOI, HORPCA (for $\lambda=0.2, 0.6, \ldots, 3$ and $\mu=300,500$), HORPCA+HOSVD (same  $\lambda$ and $\mu$ combinations as HORPCA), and the proposed L1-HOOI. 

In Figure \ref{mesh}, we plot  MNSE computed  over $50$ data/noise/corruption realizations, 
versus $\lambda$ for the four methods. In addition, we plot the average noise-to-data  benchmark  
$
{\| \mathbfcal N \|_F^2}{ \| \tX \|_F^{-2}}.
$
In accordance with our previous studies, we observe that L1-HOOI offers markedly lower MNSE than standard HOOI. In addition, we notice that for   specific selection of $\mu$ and $\lambda$  ($\mu=300$ and $\lambda=0.6$) HORPCA+HOSVD may attain MNSE even lower than HOOI. However,  for any  different  selection of $\lambda$,  HORPCA+HOSVD attains higher MNSE than HOOI. In addition, we plot the performance of HORPCA when it is not followed by HOSVD. We notice that, expectedly, for specific selections of $\mu$ and $\lambda$ the method is capable of removing the outliers, but not the dense noise component --thus,  the MNSE approaches the average noise-to-data benchmark. This study highlights the corruption-resistance of L1-HOOI, while, similarly to HOSVD and HOOI, it does not depend on any tunable parameters, other than $\{d_{n}\}_{n \in [N]}$.

\subsection{Classification} 
 \label{classtud}

{Tucker decomposition is commonly employed for classification of multi-way data samples. Below, we consider the Tucker-based classification framework originally presented in \cite{Phan10}.}  That is,  we consider $C$ classes of  order-$N$ tensor objects of   size $D_1 \times D_1 \times \ldots \times D_N$ and $M_c$ labeled samples available from  the $c$-th class, $c \in [C]$, that can be used for training a classifier.  
The training data from class $c$ are organized in tensor $\tX_{c} \in \mathbb R^{D_1 \times D_2 \times \ldots \times D_N \times M_c}$ and the total of  $M=\sum_{c=1}^C M_c$ training data are organized in tensor $\tX \in \mathbb R^{D_1 \times \ldots \times D_N \times M}$, constructed by concatenation of $\tX_1, \ldots, \tX_C$ across mode $(N+1)$.

In the first processing step,   $\tX$ is Tucker decomposed, obtaining  the feature bases $\{\mathbf U_n \in \mathbb S(D_n, d_n) \}_{n \in [N]}$ for the first $N$  modes  (feature modes) and the sample basis $\mathbf Q \in \mathbb S(M,M)$ for the $(N+1)$-th   mode (sample mode). The obtained feature bases are then  used to compress the training data, as
\begin{align}
\mathbfcal G_c= \tX_c \times_{n \in [N]} \mathbf U_n^\top \in \mathbb R^{d_1 \times \ldots \times d_N \times M_c}
\end{align}
for every $c \in [C]$.
Then, the $M_c$ compressed tensor objects from the $c$-th class are vectorized (equivalent to mode-$(N+1)$ flattening) and stored in the data matrix 
\begin{align}
\mathbf G_c = [\mathbfcal G_c]_{(N+1)}^\top \in \mathbb R^{p \times M_c},
\end{align}
where $p= \prod_{n \in [N]} d_n$. Finally, the labeled columns of $\{\mathbf G_c\}_{c \in [C]}$ are used to train any standard vector-based classifier, such as support vector machines (SVM), or $k$-nearest-neighbors ($k$-NN). 

When an unlabeled testing point $\mathbfcal Y \in \mathbb R^{D_1 \times \ldots \times D_N}$ is received, it is first compressed using the Tucker-trained bases as $\mathbfcal Z = \mathbfcal Y \times_{n \in [N]} \mathbf U_n^\top$. Then,  $\mathbfcal Z$ is vectorized  as $\mathbf z = \text{vec}(\mathbfcal Z) = [\mathbfcal Z]_{(N+1)}^\top$. Finally, vector $\mathbf z$ is classified based on the standard vector classifier trained above.

In this study, we focus on the classification  of order-$2$ data ($N=2$) from the MNIST  image dataset of handwritten digits \cite{mnist}.  Specifically, we consider  $C=5$ digit classes (digits $0, 1, \ldots, 4$) and   $M_1 = \ldots = M_5=10$  image samples of size $(D=D_1=28) \times (D=D_2)$ available from each class.   
To make the classification task more challenging, we consider that each training image is corrupted by heavy-tail noise with probability $\alpha$. Then, each pixel of a corrupted image is additively corrupted by heavy tailed noise $n \sim \text{unif}(0,v)$, with probability $\beta$. Denoting the average pixel energy by $w^2=\frac{1}{D^2 M} \| \tX \|_F^2$, we choose $v$ so that $ \frac{w}{ \sqrt{\mathbb E\{ n^2\}}} =  10 $.  

We conduct Tucker-based classification as  described above, for $d=d_1=d_2$, using a  nearest-neighbor (NN)  classifier (i.e., $1$-NN), by which testing sample    $\mathbf z$ is assigned to class\footnote{We consider a simple classifier, so that the  study focuses to the impact of each compression method.} 
\begin{align}
c^* = \underset{c \in [C]}{\text{argmin}} \left\{ \underset{ j \in [M_c]}{\min}   \| \mathbf z - [\mathbf G_c]_{:,j} \|_2^2 \right\}.
\end{align}

 \begin{figure}[t!]
	\centering
	\includegraphics[width=0.6\textwidth]{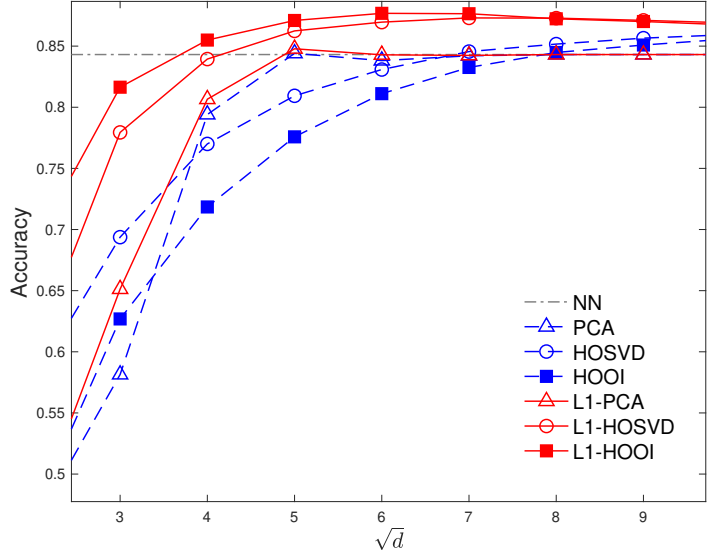}
	\caption{Classification accuracy versus ${d}$, for $\alpha=0.2$ and $\beta=0.5$. }
	\label{acc1}
\end{figure}

For a given training dataset, we  classify $500$ testing points from each class. Then, we repeat the training/classification procedure on $300$ distinct  realizations of training data, testing data, and corruptions.
In Figure \ref{acc1}, we plot the average  classification accuracy versus ${d}$ for $\alpha=0.2$ and $\beta=0.5$,  for HOSVD, HOOI, L1-HOSVD, L1-HOOI, as well as PCA, L1-PCA,\footnote{Denoting by $\mathbf U$ the $\min\{p,M\}$ PCs/L1-PCs of $[\tX]_{(N+1)}^\top \in \mathbb R^{P \times M}$, we train any classifier on the labeled columns of $\mathbf U^\top [\tX]_{(N+1)}$ and classify the vectorized and projected testing sample $\mathbf U^\top \text{vec}(\mathbfcal Y)$.} and plain NN classifier that  returns the label of the nearest column of $[\tX]_{(N+1)}^\top \in \mathbb R^{P \times M}$ to the vectorized testing sample  $\text{vec}(\mathbfcal Y)$.
We observe that, in general, the compression-based methods  can attain superior performance than plain NN. Moreover, we notice that   ${d}>7$ implies $p>M$ and, thus,  the PCA/L1-PCA  methods attain constant performance, equal to plain NN. Moreover, we notice that L1-PCA outperforms PCA, for every value of $d \leq 7$. For $4 \leq d \leq 7$, PCA/L1-PCA outperform the Tucker methods. 
Finally, the proposed L1-Tucker methods outperform standard Tucker and PCA/L1-PCA, for every $d$, and attain the highest classification accuracy of about $89\%$ for $d=6$ ($5\%$ higher than plain NN).  

 \begin{figure}[t!]
	\centering
		\includegraphics[width=0.6\textwidth]{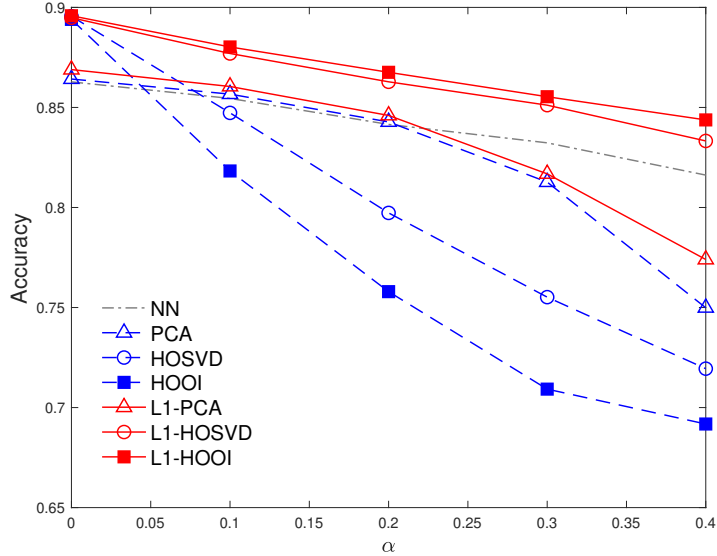} 
	\caption{Classification accuracy versus $\alpha$, for $d=5$ and $\beta=0.8$.}
	\label{acc2}
\end{figure}

 \begin{figure}[t!]
	\centering
				\includegraphics[width=0.6\textwidth]{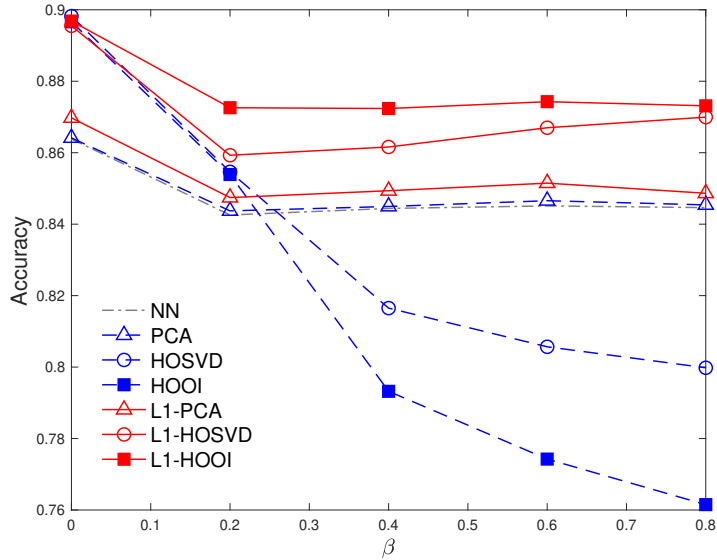}
	\caption{Classification accuracy versus $\beta$, for $\alpha=0.2$ and $d=5$.}
	\label{acc3}
\end{figure}

Next, we fix  $d=5$ and $\beta=0.8$ and plot in Figure \ref{acc2} the average classification accuracy, versus $\alpha$.  This figure reveals the sensitivity of standard HOSVD and HOOI as the training data corruption probability increases. At the same time,  the proposed L1-Tucker methods exhibit   robustness against the corruption, maintaining the highest average accuracy for every value of $\alpha$. For instance, for image-corruption probability $\alpha=0.3$, L1-HOSVD and L1-HOOI attain about $ 87\%$ accuracy, while HOSVD and HOOI attain accuracy $ 75\%$ and $ 71\%$, respectively.

Last, in  Figure \ref{acc3}, we plot the average classification accuracy, versus the pixel corruption probability $\beta$, fixing again $\alpha=0.2$ and $d=5$. We observe that, for any value of $\beta$, the performance of  the L1-HOSVD and L1-HOOI does not drop below $86\%$ and $87.5\%$, respectively. On the other hand, as $\beta$ increases, NN and PCA-based methods perform close to $85\%$. The performance of standard Tucker methods decreases markedly, even as low as $76\%$, for intense corruption with $\beta=0.8$. 
The above studies highlight the benefit of L1-Tucker compared to standard Tucker and PCA counterparts. 
 
\section{Conclusions}

We studied L1-Tucker,  an L1-norm based reformulation of standard Tucker tensor decomposition. In addition, we presented two algorithms for its solution, L1-HOSVD and L1-HOOI. Both algorithms were accompanied by formal complexity and convergence analysis. We carried out numerical studies on tensor reconstruction and classification,  both on synthetic and on real data, comparing the proposed L1-Tucker methods with standard counterparts. In our numerical studies, L1-Tucker performed similar to standard Tucker  when the processed data are corruption-free, while, in contrast to Tucker, it attained sturdy resistance against heavy corruptions.


\ifCLASSOPTIONcaptionsoff
  \newpage
\fi



%
\bibliographystyle{siamplain}
\bibliography{l1tucker_ref}

%

\end{document}